\documentclass[12pt,reqno]{article}

\usepackage[T1]{fontenc}
\usepackage[utf8]{inputenc}
\usepackage{lmodern}
\usepackage{microtype}

\usepackage{amsmath,amssymb,amsthm}
\usepackage{hyperref}

\hypersetup{
  colorlinks=true,
  linkcolor=blue,
  citecolor=blue,
  urlcolor=blue
}

\newtheorem*{maintheorem}{Main theorem}
\newtheorem*{maincorollary}{Corollary}
\newtheorem{remark}{Remark}
\newtheorem{definition}{Definition}

\newtheorem{example}{Example}
\newtheorem{proposition}{Proposition}
\newtheorem{theorem}{Theorem}
\newtheorem{corollary}{Corollary}
\newtheorem{lemma}{Lemma}

\title{On finite perfect two-sided skew braces}
\author{Marco Damele\thanks{Dipartimento di Matematica, Università di Cagliari,
Via Ospedale 72, 09124 Cagliari, Italy;
\texttt{marco.damele@unica.it}; ORCID 0009-0008-3088-5766.
The author was supported by INdAM and GNSAGA -- Gruppo Nazionale per le
Strutture Algebriche, Geometriche e le loro Applicazioni, and by ProBiki,
funded by Fondazione di Sardegna.}}
\date{}

\begin{document}

\maketitle

\begin{abstract}
We prove a structure theorem for finite perfect two-sided skew braces. The main
tool is a central product theory for skew braces, developed here in both
external and internal form; we show that these two constructions are
equivalent. Our main result states that every finite perfect two-sided skew
brace \(B\) admits the canonical decomposition
$B=B^2\circ B^{2,\operatorname{op}},$
where \(B^2\) is almost trivial with perfect additive group, while
\(B^{2,\operatorname{op}}\) is trivial with perfect additive group. Thus finite
perfect two-sided skew braces are classified, up to central amalgamation, by
trivial and almost trivial skew braces arising from perfect groups. This
decomposition has strong consequences for the underlying groups: for finite
two-sided skew braces, perfectness of the skew brace is equivalent to
perfectness of either the additive or the multiplicative group. In the
trivial-center case the central product becomes a direct product, recovering
Trappeniers' classification of finite simple two-sided skew braces. We also
show that quasi-simple two-sided skew braces are necessarily either trivial or
almost trivial. Finally, we prove that this rigidity is genuinely two-sided by
constructing a quasi-simple skew brace which is not two-sided and is neither
trivial nor almost trivial.
\end{abstract}

\emph{Mathematics Subject Classification (2020):}
Primary 16T25, 20N99; Secondary 20D40, 20F14, 81R50.

\emph{Keywords:} skew brace, two-sided skew brace, perfect skew brace,
perfect group, central product, quasi-simple group.

\newpage
\tableofcontents

\section{Introduction}

Skew braces were introduced by Guarnieri and Vendramin in
\cite{GuarnieriVendramin2017} as a non-abelian generalization of left braces
introduced by Rump in \cite{Rump2007}, with the aim of providing an algebraic
framework for the study of set-theoretic solutions of the Yang--Baxter
equation. Since then, skew braces have become an important meeting point between
group theory, ring theory and the theory of the Yang--Baxter equation. 

Formally, a skew brace is a triple \((B,+,\cdot)\) such that \((B,+)\) and
\((B,\cdot)\) are groups and, for all \(a,b,c\in B\), the following
compatibility condition holds:
\[
a\cdot(b+c)=a\cdot b-a+a\cdot c.
\]
\emph{Throughout the paper, all skew braces are assumed to be finite}.
A recurring theme in the theory of skew braces is the interaction between the
two underlying groups. One is often interested in understanding how structural
properties of the additive group \((B,+)\), or of the multiplicative group
\((B,\cdot)\), constrain the skew brace structure, and conversely how
brace-theoretic properties are reflected in the two group structures. This
perspective has led to several structural results, for example in the study of
skew braces with prescribed underlying groups and in the analysis of nilpotency,
solubility and related finiteness conditions; see, for instance,
\cite{Damele2026, Trappeniers2023, BallesterBolinchesEstebanRomeroJimenezSeralPerezCalabuig2024}.

The present paper follows this line of investigation in the case of
perfectness. Since perfectness is naturally expressed in terms of quotients, it
is useful to recall that, in skew brace theory, quotients are taken with respect
to ideals, the analogues of normal subgroups. The precise definition will be
given in Section~\ref{Preliminaries}.

In group theory, perfect groups form a fundamental class: they are
the groups with no non-trivial abelian quotients. For skew braces there is more
than one possible notion of perfectness. In \cite{CSV2019}, a skew brace \(B\)
is called perfect if \(B=B^2\), where \(B^2=B*B\) is the ideal generated by the
brace products; see Section~\ref{Preliminaries} for the relevant definitions.
This condition measures perfectness through the brace product alone. In the
present paper we use a different, and more group-theoretic, notion: we say that
\(B\) is perfect if $B=[B,B]^B,$
where \([B,B]^B\) denotes the commutator ideal of \(B\), namely the smallest
ideal such that the corresponding quotient is an abelian skew brace. Thus, in
this sense, a perfect skew brace is precisely a skew brace with no non-trivial
abelian quotients. In particular, every simple non-abelian skew brace is perfect. 

We specialize our study to the two-sided case. Recall that a skew brace \(B\) is
called \emph{two-sided} if it also satisfies the right brace identity
\[
(a+b)c=ac-c+bc
\]
for all \(a,b,c\in B\). For example, every group \(G\) gives rise to the trivial two-sided skew brace $\operatorname{Triv}(G)=(G,\cdot,\cdot),$
in which both operations coincide with the group operation of \(G\). We shall
also use the almost trivial skew brace associated with \(G\), denoted by $\operatorname{aTriv}(G)=(G,\cdot^{\operatorname{op}},\cdot),$
where \(\cdot^{\operatorname{op}}\) is the opposite operation of \(\cdot\).

Two-sided skew braces form a distinguished and particularly rigid subclass of
skew braces. In the case where the additive group is abelian, they correspond
to Jacobson radical rings; see \cite{CV}. In the general non-abelian setting,
they still retain strong ring-theoretic features and provide a natural
framework for studying the interaction between the additive and multiplicative
groups; see \cite{Nasybullov2019,Trappeniers2023}. The importance of this class
is also reflected in its connection with a well-known open problem in the
theory of skew braces: whether, for a finite skew brace, solubility of the
additive group implies solubility of the multiplicative group. While this
problem remains open in general, Nasybullov proved that it has an affirmative
answer for two-sided skew braces \cite{Nasybullov2019}. Thus the two-sided
identity imposes strong constraints on the relation between the two underlying
groups, making this class especially suitable for structural investigations.

The main goal of this paper is to understand how perfectness behaves in the
two-sided setting. More precisely, we ask to what extent the condition
\(B=[B,B]^B\) forces structural properties on the additive group \((B,+)\) and
on the multiplicative group \((B,\cdot)\), and conversely how perfectness of
the underlying groups is reflected in the brace structure. Answering these
questions requires a detailed structural description of finite perfect
two-sided skew braces.

Finite simple two-sided skew braces were classified by Trappeniers
\cite{Trappeniers2023}. In particular, his classification shows that a finite
simple two-sided skew brace is necessarily one of the two standard types: it is
either trivial or almost trivial. Equivalently, it is isomorphic to
\(\operatorname{Triv}(G)\) or to \(\operatorname{aTriv}(G)\), where \(G\) is a
finite non-abelian simple group.

The present paper extends this picture from the simple case to the broader
class of finite perfect two-sided skew braces. The passage from simple to
perfect braces introduces a new phenomenon. Indeed, a perfect two-sided skew
brace need not be simple and may have non-trivial centre. Hence one should not
expect such a brace to be described by a single trivial or almost trivial
component. Instead, the natural object replacing the two alternatives in the
simple case is a decomposition, in which the two canonical components
\(B^2\) and \(B^{2,\operatorname{op}}\) are glued together along a common
central ideal. Here \(B^{2,\operatorname{op}}\) denotes the square of the opposite skew brace,
namely \(B^{2,\operatorname{op}}=(B^{\operatorname{op}})^2\); see
Section~\ref{Preliminaries} for the relevant definitions. This is the point of view developed in the present paper. We first introduce a
central product theory for skew braces, both in external and internal form, and
prove that the two constructions are equivalent. With this terminology, our main result says that finite perfect two-sided skew
braces are described, up to central amalgamation, by trivial and almost trivial
skew braces associated with perfect groups.

\begin{maintheorem}
Let \(B\) be a finite perfect two-sided skew brace. Then:
\begin{enumerate}
    \item \(B=B^{2} \circ B^{2,\operatorname{op}}\);
    \item \(B^2\) is almost trivial with perfect additive group;
    \item \(B^{2,\operatorname{op}}\) is trivial with perfect additive group.
\end{enumerate}
\end{maintheorem}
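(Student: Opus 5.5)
The plan is to work throughout with the two star operations: the brace product \(a*b=-a+ab-b\) of \(B\) and the brace product \(a*^{\operatorname{op}}b=-b+ab-a\) of \(B^{\operatorname{op}}\). They generate the ideals \(B^2\) and \(B^{2,\operatorname{op}}\), and they differ only by additive conjugation and commutators. Since \(B\) is two-sided, so is \(B^{\operatorname{op}}\), and both \(B^2\) and \(B^{2,\operatorname{op}}\) are ideals of \(B\). The proof has three stages: first show that the two ideals jointly generate \(B\), then show that they centralize each other with central intersection, and finally identify the two components. The main theorem then follows from the internal central product criterion established earlier.

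\emph{Stage 1: \(B=B^2+B^{2,\operatorname{op}}\).} Put \(I=B^2+B^{2,\operatorname{op}}\), which is an ideal. In \(B/I\) both star operations vanish, so \(ab=a+b=b+a\). Hence \((B/I,+)\) is abelian, the two group structures coincide, and \(B/I\) is an abelian (trivial) skew brace. Since \(B=[B,B]^B\), we get \(I=B\).

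\emph{Stage 2: \(B^2\) and \(B^{2,\operatorname{op}}\) commute, and \(B^2\cap B^{2,\operatorname{op}}\) is central.} Here the two-sided identities should be used as distributivity laws. Left distributivity of \(\lambda_a\) combined with the right identity \((a+b)c=ac-c+bc\) gives expansions of \(x*(y*z)\) and \((x*y)*z\), and likewise for \(*^{\operatorname{op}}\). From these I would derive the following identities:
\[
B^2*B^{2,\operatorname{op}}=0,\qquad B^{2,\operatorname{op}}*B^2=0,\qquad [B^2,B^{2,\operatorname{op}}]_+=0.
\]
From them it follows that the two ideals centralize each other in both operations. Because \(B=B^2+B^{2,\operatorname{op}}\), their intersection is then annihilated by everything and commutes with everything, so it lies in the centre of \(B\). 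Feeding these facts into the internal central product result from the earlier section yields \(B=B^2\circ B^{2,\operatorname{op}}\).

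I expect Stage 2 to be the main obstacle, since this is where two-sidedness is genuinely used. I would first try to imitate Trappeniers' computations for simple two-sided braces and look for a mixed identity of the form \((a*b)*^{\operatorname{op}}c\in\ker\). Failing that, I would pass to the quotients \(B/B^2\) (a trivial brace) and \(B/B^{2,\operatorname{op}}\) (an almost trivial brace) and exploit that each ideal acts trivially modulo the other.

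\emph{Stage 3: identification of the components.} Modulo the central intersection, \(B^2\) is isomorphic to \(B/B^{2,\operatorname{op}}\), whose opposite brace has vanishing star. So \(B/B^{2,\operatorname{op}}\) is almost trivial, and symmetrically \(B/B^2\) is trivial. To lift this to the components themselves, I would show that on \(B^2\) the product \(*^{\operatorname{op}}\) vanishes. Indeed, \(B^2\) is generated by elements \(a*b\), and by Stage 2 the \(B^{2,\operatorname{op}}\)-part of \(a\) and \(b\) contributes nothing, so everything reduces to elements in which \(*^{\operatorname{op}}\) lands in the central intersection. A perfectness argument, showing that \(B^2\) equals its own commutator ideal because \(B\) does and \(B^{2,\operatorname{op}}\) centralizes it, should then kill this central contribution. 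The dual argument shows \(B^{2,\operatorname{op}}\) is trivial.

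\emph{Perfect additive groups.} For a trivial or almost trivial brace, perfectness as a brace is equivalent to perfectness of the additive group, since the commutator ideal is just the group commutator subgroup. Each factor is perfect because \(B\) is perfect and the factors commute, so each factor has a perfect additive group.
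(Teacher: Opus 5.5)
Your Stage 1 is correct, and slightly more direct than the paper's version, which goes through perfectness of $(B,+)$. The genuine gap is Stage 2, which you leave as a plan. The plan cannot succeed as stated, because it never uses finiteness, and the theorem is false without it.

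\emph{A counterexample without finiteness.} Take a nonzero Jacobson radical ring $R$ with $R^2=R$, e.g.\ the maximal ideal of a valuation ring with value group $\mathbb{Q}$. View it as a two-sided brace whose multiplication is the adjoint operation $(a,b)\mapsto a+b+ab$. Then $a*b$ and $a*_{\operatorname{op}}b$ both equal the ring product $ab$. So the generators $ab-(a+b)$ of the commutator ideal span $R^2=R$, and $B$ is perfect. Yet $B^2=B^{2,\operatorname{op}}=R$ and $B^2*B^{2,\operatorname{op}}=R^2\neq 0$. Hence no expansion of the left and right brace identities, even combined with perfectness, can produce your three identities.

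\emph{The missing idea.} For finite two-sided $B$, brace-perfectness forces $(B,+)$ to be perfect. Indeed, $[B,B]_+$ is an ideal (Trappeniers), and $B/[B,B]_+$ is a finite, hence nilpotent, radical ring, which perfectness kills. With $(B,+)$ perfect, the paper takes $[B^2,B^{2,\operatorname{op}}]_+=0$ and $I=B^2\cap B^{2,\operatorname{op}}\le Z(B,+)$ from Trappeniers' Lemma 4.1 and Theorem 4.3. It then observes that the maps $b\mapsto i*b$ and $b\mapsto b*i$ for $i\in I$, and $x\mapsto x*c$ for $c\in B^{2,\operatorname{op}}$ (whose values lie in $I$), are additive homomorphisms from the perfect group $(B,+)$ into the abelian group $(I,+)$; the last two use two-sidedness. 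So these maps vanish. This gives $I\le Z(B)$ and $B*B^{2,\operatorname{op}}=0$, and the other mixed product follows by passing to $B^{\operatorname{op}}$.

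\emph{Stage 3 and the final paragraph.} Both rest on the principle that the factors of a perfect central product are perfect, which is false. For instance, $B=Z(B)\circ B$ with $B=\operatorname{Triv}(\operatorname{SL}(2,5))$ is an internal central product of a perfect brace in which one factor is $C_2$. Moreover, even if you knew $B^2$ were perfect as a brace, killing the central values $c*_{\operatorname{op}}c'\in I$ for $c,c'\in B^2$ needs perfectness of the additive group $(B^2,+)$. That again requires the finite radical-ring argument.

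\emph{What the paper does instead.} It uses the vanishing mixed products and $B=B^2+B^{2,\operatorname{op}}$ to compute $(c+a)*(c'+a')=c*c'$, so $B^2=B^2*B^2$. It then applies its first lemma: a finite two-sided brace with $B*B=B$ has perfect additive group, by the radical-ring argument. It is also almost trivial, since $a\mapsto a*_{\operatorname{op}}b$ is an additive homomorphism into $B^{2,\operatorname{op}}\le Z(B,+)$, which Trappeniers' Lemma 4.1 provides. Item 3 follows from the same argument in $B^{\operatorname{op}}$ together with $B*B^{2,\operatorname{op}}=0$.
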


This decomposition has several consequences. First, it shows that, in the
two-sided setting, the brace-theoretic notion of perfectness is completely
detected by either of the two underlying groups.

\begin{maincorollary}
Let \(B\) be a finite two-sided skew brace. Then the following conditions are
equivalent:
\begin{enumerate}
    \item \(B\) is perfect;
    \item the additive group \((B,+)\) is perfect;
    \item the multiplicative group \((B,\cdot)\) is perfect.
\end{enumerate}
\end{maincorollary}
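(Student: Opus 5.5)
We prove the corollary from the Main theorem, using the following standard facts: quotients of two-sided skew braces are two-sided; the commutator ideal \([B,B]^B\) is the smallest ideal \(I\) with \(B/I\) trivial and abelian; and an abelian skew brace has abelian additive and multiplicative groups, which coincide.

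\emph{(1)\(\Rightarrow\)(2),(3).} Assume \(B\) is perfect. By the Main theorem, \(B=B^2\circ B^{2,\operatorname{op}}\), a central product of \(B^2\) and \(B^{2,\operatorname{op}}\). Both factors are ideals of \(B\), and the operations of \(B\) restrict to them. The additive group of \(B^2\) is perfect by part 2. Since \(B^2\) is almost trivial, its multiplicative group is the opposite of its additive group, hence also perfect. Likewise \(B^{2,\operatorname{op}}\) is trivial with perfect additive group, so its two groups coincide and are perfect.

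From the internal central product description, every element of \(B\) can be written as \(x\circ y\) with \(x\in B^2\) and \(y\in B^{2,\operatorname{op}}\). The two factors commute elementwise in both group structures modulo the amalgamated central ideal. Hence \((B,+)\) is a central product of the additive groups of the factors, and the same holds for \((B,\cdot)\). A product of two commuting perfect normal subgroups is perfect, so \((B,+)\) and \((B,\cdot)\) are perfect. The one thing to check is that the internal central product really gives each group as the product of the two subgroups. If the paper's central product is defined only through \(\circ\), I would first use the identity \(a+b=a\circ\lambda_a^{-1}(b)\) and the invariance of ideals under \(\lambda\) to transfer the statement to \(+\).

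\emph{(2)\(\Rightarrow\)(1) and (3)\(\Rightarrow\)(1).} Set \(I=[B,B]^B\). The quotient \(B/I\) is abelian, so its additive group \((B,+)/I\) is abelian. If \((B,+)\) is perfect, every quotient of it is perfect, and a perfect abelian group is trivial. Hence \(B/I=0\) and \(B=I\), so \(B\) is perfect. The argument for \((B,\cdot)\) is identical, since the multiplicative group of \(B/I\) is \((B,\cdot)/I\), which is also abelian. This direction does not need the two-sided hypothesis.

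\emph{Main obstacle.} The delicate point is entirely in the forward direction: showing that the brace-level central product decomposition forces both group structures to decompose as central products of groups. Once that is in place, perfectness of the underlying groups follows from the elementary group-theoretic fact that a product of two commuting perfect normal subgroups is perfect.
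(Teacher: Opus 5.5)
Your proof is correct and is essentially the paper's argument. The converse implications use the same quotient argument, and the forward direction is read off from $B=B^2\circ B^{2,\operatorname{op}}$. You work with the internal decomposition (each of $(B,+)$ and $(B,\cdot)$ is generated by two perfect subgroups), whereas the paper passes to the external form and sees $(B,\cdot)$ as a quotient of $(G,\cdot)\times(H,\cdot)$.

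The step you flag as the main obstacle is routine. The paper defines internal central products by $B=I+J$, and $i+j=i\,\lambda_i^{-1}(j)$ with $\lambda_i^{-1}(j)\in J$ gives $B=IJ$ as well. Commutation is not even needed, since a group generated by perfect subgroups is perfect.
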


This equivalence shows that finite two-sided skew braces behave, with respect
to perfectness, much more rigidly than arbitrary skew braces. The same
decomposition also gives a transparent description in the case of trivial
center.

\begin{maincorollary}
Let \(B\) be a finite perfect two-sided skew brace. If \(Z(B)=0\), then
\[
B \simeq \operatorname{Triv}(G) \times \operatorname{aTriv}(H)
\]
for some finite perfect groups \(G\) and \(H\).
\end{maincorollary}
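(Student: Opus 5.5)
This corollary should follow directly from the Main theorem together with the equivalence between internal and external central products. Write \(B=B^2\circ B^{2,\operatorname{op}}\) as an internal central product. Set \(I=B^2\) and \(J=B^{2,\operatorname{op}}\). Then \(I\) and \(J\) are ideals with \(B=I+J\), and \(I\cap J\subseteq Z(B)\), since the amalgamated subgroup of a central product is central. Under the hypothesis \(Z(B)=0\) this gives \(I\cap J=0\). An internal central product with trivial intersection should be an internal direct product. Concretely, the map \(I\times J\to B\), \((x,y)\mapsto x+y\), is a skew brace isomorphism, because the two factors centralize each other both additively and multiplicatively, and the brace action of one factor on the other is trivial. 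Hence \(B\simeq B^2\times B^{2,\operatorname{op}}\).

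Next I would identify the factors. By the Main theorem, \(B^{2,\operatorname{op}}\) is trivial with perfect additive group \(G\), so \(B^{2,\operatorname{op}}\simeq\operatorname{Triv}(G)\) with \(G\) perfect. Similarly, \(B^2\) is almost trivial with perfect additive group. So \(B^2\simeq\operatorname{aTriv}(H)\), where \(H\) is its multiplicative group. The additive group of \(\operatorname{aTriv}(H)\) is \(H^{\operatorname{op}}\simeq H\) via inversion, so \(H\) is perfect as well.

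The only step needing care is the passage from central product to direct product. In particular, one must check that the central product used in the paper really amalgamates along a subgroup of \(Z(B)\) in the brace sense. That is, the intersection must lie in the kernel of the \(\lambda\)-action, in the additive centre and in the multiplicative centre. Then \(Z(B)=0\) forces the amalgamated part to vanish, and the external-internal equivalence identifies the central product over \(0\) with the direct product.
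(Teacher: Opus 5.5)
Your proposal is correct and is essentially the paper's argument. The paper writes \(B\simeq(\operatorname{Triv}(G)\times\operatorname{aTriv}(H))/I\) using the main theorem and the internal/external equivalence, and then notes that \(Z(B)=0\) forces the amalgamated central ideal \(I\) to vanish. That is precisely your observation that \(B^2\cap B^{2,\operatorname{op}}\le Z(B)=0\) makes \((x,y)\mapsto x+y\) injective, and your identification of the two factors as \(\operatorname{Triv}(G)\) and \(\operatorname{aTriv}(H)\) with \(G,H\) perfect is also correct.
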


In particular, since every simple skew brace has trivial center, this recovers
Trappeniers' classification of finite simple two-sided skew braces as a special
case. Moreover, using the fact that finite perfect two-sided skew braces
satisfy
\[
Z(B/Z(B))=0,
\]
see \cite{Tsang2026}, we obtain the following description of the quotient by
the center.

\begin{maincorollary}
Let \(B\) be a finite perfect two-sided skew brace. Then
\[
B/Z(B)\simeq \operatorname{Triv}(G)\times \operatorname{aTriv}(H)
\]
for some finite perfect groups \(G\) and \(H\).
\end{maincorollary}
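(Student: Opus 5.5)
The statement to prove is the last corollary: for a finite perfect two-sided skew brace \(B\), one has \(B/Z(B)\simeq \operatorname{Triv}(G)\times\operatorname{aTriv}(H)\). The plan is to reduce it to the trivial-center corollary stated just before it, applied to the quotient \(\bar B=B/Z(B)\). For this we need three facts about \(\bar B\):
\begin{enumerate}
\item \(\bar B\) is a finite two-sided skew brace;
\item \(\bar B\) is perfect;
\item \(Z(\bar B)=0\).
\end{enumerate}
Once these hold, the trivial-center corollary gives \(\bar B\simeq \operatorname{Triv}(G)\times\operatorname{aTriv}(H)\) with \(G,H\) finite perfect groups, which is exactly the claim.

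For the first point, \(Z(B)\) is an ideal of \(B\), so the quotient is a skew brace. The left brace identity holds in \(\bar B\), and so does the right identity \((a+b)c=ac-c+bc\). Both are equational laws satisfied by all elements of \(B\), and the quotient map is a homomorphism for both operations, so they pass to \(\bar B\). Finiteness is clear.

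For the second point, perfectness means \(B=[B,B]^B\), that is, \(B\) has no non-trivial abelian quotient. Any abelian quotient of \(\bar B\) by an ideal is also an abelian quotient of \(B\), by the correspondence between ideals of \(\bar B\) and ideals of \(B\) containing \(Z(B)\). It is therefore trivial. Equivalently, the image of \([B,B]^B\) in \(\bar B\) is contained in \([\bar B,\bar B]^{\bar B}\), and that image is all of \(\bar B\).

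The third point is the only non-formal input. This is precisely the result of \cite{Tsang2026} quoted just before the statement: finite perfect two-sided skew braces satisfy \(Z(B/Z(B))=0\), and we take it as given. The main thing to check is that the notion of centre used there agrees with the one in this paper, and that the hypotheses there match ours (finite, perfect in the sense \(B=[B,B]^B\), two-sided). I expect this matching of conventions to be the only possible obstacle.

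If it failed, I would try proving \(Z(\bar B)=0\) directly from the Main theorem, by the analogue of Grün's lemma. Write \(B=B^2\circ B^{2,\operatorname{op}}\), where \(B^2\cong\operatorname{aTriv}\) of a perfect group and \(B^{2,\operatorname{op}}\cong\operatorname{Triv}\) of a perfect group. In this decomposition the centre of \(B\) is essentially the amalgamated centres of the two factors. Grün's lemma for perfect groups, \(Z(P/Z(P))=1\), would then transfer componentwise.
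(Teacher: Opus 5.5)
Your proposal is correct and is essentially the paper's own proof. The paper likewise quotes Tsang's result \(Z(B/Z(B))=0\) for finite perfect two-sided skew braces and then applies the trivial-centre corollary to \(B/Z(B)\); it leaves implicit the checks you carry out explicitly, namely that the quotient is finite, two-sided and perfect. Your Grün-type fallback also works. Additive centrality in \((\operatorname{Triv}(G)\times\operatorname{aTriv}(H))/N\) forces both coordinates into \(Z(G)\times Z(H)\) by Grün's lemma, which gives \(B/Z(B)\simeq\operatorname{Triv}(G/Z(G))\times\operatorname{aTriv}(H/Z(H))\) directly, without the citation.
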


We also apply the structural theorem to a skew-brace analogue of quasi-simple
groups. We call a skew brace \(B\) \emph{quasi-simple} if \(B\) is perfect and
\(B/Z(B)\) is simple. In the two-sided case, the above decomposition imposes a
strong restriction: a quasi-simple skew brace cannot be obtained as a genuine
central product of a non-trivial trivial component and a non-trivial almost
trivial component. More precisely, we prove the following result.

\begin{maincorollary}
Let \(B\) be a finite two-sided skew brace. If one of the following conditions
holds:
\begin{enumerate}
    \item \(B\) is quasi-simple;
    \item the additive group \((B,+)\) is quasi-simple;
    \item the multiplicative group \((B,\cdot)\) is quasi-simple,
\end{enumerate}
then \(B\) is either trivial or almost trivial.
\end{maincorollary}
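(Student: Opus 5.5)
We plan to reduce all three hypotheses to case (1), perfectness of \(B\), and then apply the Main theorem. First, if \((B,+)\) or \((B,\cdot)\) is quasi-simple, it is in particular a perfect group. By the Corollary equating perfectness of \(B\), \((B,+)\) and \((B,\cdot)\), the skew brace \(B\) is then perfect. So in every case \(B\) is perfect, and the Main theorem gives
\[
B=B^2\circ B^{2,\operatorname{op}},
\]
with \(B^2\simeq\operatorname{aTriv}(H)\) and \(B^{2,\operatorname{op}}\simeq\operatorname{Triv}(G)\) for perfect groups \(G,H\). Passing to the quotient by the centre, the corollary on \(B/Z(B)\) yields
\[
B/Z(B)\simeq \operatorname{Triv}(G')\times\operatorname{aTriv}(H'),
\]
where \(G'\) and \(H'\) are the images of the two components. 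Since the internal central product amalgamates along a central ideal, \(G'\) should be the image of \(B^{2,\operatorname{op}}\) and \(H'\) the image of \(B^2\).

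Next we show that one factor of \(B/Z(B)\) is trivial. In case (1), \(B/Z(B)\) is simple, so it cannot be a direct product of two non-zero ideals; hence \(G'=1\) or \(H'=1\). In cases (2) and (3), the relevant group of \(B/Z(B)\) is \(G'\times H'\) (with \(H'\) taken with its opposite operation in the additive case). Since \(Z(B)\) is contained in the group centre of the corresponding group, it maps into a central subgroup there. The quotient \(G'\times H'\) is then a quotient of a quasi-simple group by a central subgroup, and such a quotient is again simple or trivial. It is therefore not a direct product of two non-trivial groups, so again one factor is trivial.

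Finally we lift this back to \(B\). If \(G'=1\), then \(B^{2,\operatorname{op}}\subseteq Z(B)\). Since \(B^{2,\operatorname{op}}\) is perfect as a group and a central subgroup is abelian, we get \(B^{2,\operatorname{op}}=0\), and so \(B=B^2\) is almost trivial. Symmetrically, if \(H'=1\), then \(B^2\) is perfect and central, hence zero, and \(B=B^{2,\operatorname{op}}\) is trivial. One detail to check is that \(B=B^2\circ 0\) really means \(B=B^2\). This should follow from the internal central product description: the two factors generate \(B\) and meet in a central ideal.

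I expect the main obstacle to be the bookkeeping in the second step. One must correctly identify the images of \(B^2\) and \(B^{2,\operatorname{op}}\) in \(B/Z(B)\) with the two factors \(\operatorname{Triv}(G')\) and \(\operatorname{aTriv}(H')\). One must also check that the brace centre \(Z(B)\) lies in the group centre of \((B,+)\) and of \((B,\cdot)\), so that the quasi-simple group argument applies. If the identification of factors is not immediate from the earlier corollary, I would instead argue directly. The images of \(B^2\) and \(B^{2,\operatorname{op}}\) are ideals of \(B/Z(B)\) that commute elementwise. Their intersection is central in the centreless quotient, hence trivial, so they form a direct decomposition.
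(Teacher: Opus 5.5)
Your overall plan works, and in cases (2) and (3) it takes a genuinely different route from the paper. However, one step is false as written. You assert that a quotient of a quasi-simple group by a central subgroup ``is again simple or trivial''. This already fails for $\operatorname{SL}(2,5)$ modulo the trivial subgroup. At that stage you also cannot exclude that $Z(B)$ is strictly smaller than $Z(B,+)$ or $Z(B,\cdot)$, so $(B/Z(B),+)$ and $(B/Z(B),\cdot)$ need not be simple.

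What you actually need is weaker and true: if $K$ denotes $(B,+)$ or $(B,\cdot)$, then $K/Z(B)$ is not a direct product of two non-trivial groups. First, every proper normal subgroup $N$ of a quasi-simple group $K$ is central. Indeed, its image in the non-abelian simple group $K/Z(K)$ must be trivial: otherwise $K=NZ(K)$, so $K/N$ is abelian and $N=K$ by perfectness. Now suppose $K/Z(B)\simeq G'\times H'$ with both factors non-trivial. The preimages in $K$ of the two factors are proper normal subgroups that generate $K$, hence both central, and $K$ would be abelian, a contradiction. With this replacement your argument is complete.

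Two smaller remarks. The identification you flag as the main obstacle is immediate. The image of $B*B$ in $B/Z(B)$ is $(B/Z(B))*(B/Z(B))$, and similarly for the opposite brace. These are exactly the factors produced in the proof of the trivial-centre corollary. Also, there is nothing to check about $B=B^2\circ 0$, since $B^{2,\operatorname{op}}=0$ is by definition what it means for $B$ to be almost trivial.

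The paper proceeds differently. In case (1) it never forms a direct product. It applies simplicity of $B/Z(B)$ to the images of $B^2$ and $B^{2,\operatorname{op}}$, excludes that both equal $B/Z(B)$ because the quotient would then be abelian, and lifts back as you do. This avoids the external fact $Z(B/Z(B))=0$ on which your route relies. In case (2) it quotients instead by $Z(B,+)$, which is an ideal by Trappeniers' result on characteristic subgroups of the additive group, and reapplies the structure theorem to $B/Z(B,+)$. In case (3) it invokes Tsang's theorem that $(B,+)\simeq(B,\cdot)$ when $(B,\cdot)$ is quasi-simple, and reduces to case (2). Your uniform treatment through $B/Z(B)\simeq\operatorname{Triv}(G')\times\operatorname{aTriv}(H')$ replaces that deep external theorem in case (3) by the elementary indecomposability of quasi-simple groups.

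In fact the same idea can be run inside $K$ itself. There $B^2$ and $B^{2,\operatorname{op}}$ are normal subgroups that generate $K$ and centralize each other. If $B^2\neq B$, it is central in $K$, hence zero because $(B^2,+)$ is perfect. In the multiplicative case, use that $(B^2,+)$ is the opposite group of $(B^2,\cdot)$. So $B$ is trivial. Otherwise $B^{2,\operatorname{op}}\le Z(K)$, and it vanishes because it is trivial with perfect additive group, so $B$ is almost trivial. This dispenses with $Z(B/Z(B))=0$ as well.
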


Thus quasi-simplicity rules out genuine mixed central products in the two-sided
setting. Consequently, every finite quasi-simple two-sided skew brace comes
from a quasi-simple group in one of only two ways: either as a trivial skew
brace or as an almost trivial skew brace.

Finally, we prove that the two-sided assumption cannot be removed. More
precisely, we construct a quasi-simple skew brace which is neither trivial nor
almost trivial. The construction uses exact factorizations of groups and the
associated skew braces described in
\cite[Theorem~2.3]{SmoktunowiczVendramin2018}. Starting from the exact
factorization
\[
A_5=A_4C_5,
\]
we obtain a simple skew brace and then lift this construction to
\(\operatorname{SL}(2,5)\). The resulting skew brace has additive group
isomorphic to \(\operatorname{SL}(2,5)\), is quasi-simple, but is not
two-sided. This shows that the rigidity phenomenon for quasi-simple
two-sided skew braces is genuinely two-sided and does not extend to arbitrary
skew braces.

The paper is organized as follows. In Section~\ref{Preliminaries} we recall the
basic definitions and notation on skew braces that will be used throughout the
paper. In Section~\ref{central product of skew braces} we introduce external
and internal central products of skew braces and prove that the two
constructions are equivalent. We also discuss examples illustrating the
construction. In Section~\ref{Perfect two-sided skew braces} we study finite
perfect two-sided skew braces and prove the main structural decomposition
\(B=B^2\circ B^{2,\operatorname{op}}\), together with its consequences for the
additive and multiplicative groups. We then apply this decomposition to
quasi-simple two-sided skew braces, proving that they are necessarily either
trivial or almost trivial. The final part of the paper shows that the
two-sidedness assumption is essential by constructing a quasi-simple skew brace
which is not two-sided and is neither trivial nor almost trivial.
\section{Preliminaries} \label{Preliminaries}

We collect in this section the basic definitions and notation on skew braces
needed in the sequel. For a more detailed introduction to skew braces, we refer
the reader to \cite{GuarnieriVendramin2017} and
\cite{SmoktunowiczVendramin2018}, which also contains an appendix by Byott and
Vendramin.

Let \(B\) be a skew brace. We shall use additive notation for the group
\((B,+)\) and multiplicative notation for the group \((B,\cdot)\). The
identity element of the two groups is the same and will be denoted by
\(0\). For \(b\in B\), define
\[
\lambda_b:B\longrightarrow B,\qquad
\lambda_b(a)=-b+ba.
\]
Then \(\lambda_b\in \operatorname{Aut}(B,+)\), and the map
\[
\lambda:(B,\cdot)\longrightarrow \operatorname{Aut}(B,+),
\qquad
b\longmapsto \lambda_b,
\]
is a group homomorphism. A subset \(I\subseteq B\) is called an \emph{ideal} of \(B\), and we
write \(I\trianglelefteq B\), if \((I,+)\trianglelefteq (B,+)\),
\((I,\cdot)\trianglelefteq (B,\cdot)\), and
$\lambda_b(I)\subseteq I$ for every \(b\in B\). In this case one can form the quotient skew brace \((B/I,+,\cdot)\), where additive and multiplicative cosets coincide, that is, $a+I=aI$ for every $a \in B$. If \(I\trianglelefteq B\), we denote by \([B:I]\) the index of the subgroup
\((I,+)\) in the additive group \((B,+)\). The \emph{socle} of \(B\) is
\[
\operatorname{Soc}(B)=\ker(\lambda)\cap Z(B,+).
\]
It is an ideal of \(B\). The \emph{center} of \(B\) is defined by
\[
Z(B)=\operatorname{Soc}(B)\cap Z(B,\cdot).
\]
Thus \(z\in Z(B)\) if and only if \(z\) is central in both underlying
groups and \(\lambda_z=\operatorname{id}_B\). The center \(Z(B)\) is an ideal of \(B\); in some papers it is called the
\emph{annihilator} of \(B\) and denoted by \(\operatorname{Ann}(B)\).  Let \(A\) and \(B\) be skew braces. A map \(f:A\to B\) is a
\emph{homomorphism of skew braces} if
\[
f(a+a')=f(a)+f(a')
\qquad\text{and}\qquad
f(aa')=f(a)f(a')
\]
for all \(a,a'\in A\). A bijective homomorphism is called an
isomorphism. If \(A\) and \(B\) are isomorphic, we write $A\simeq B.$ We denote by \(A\times B\) the direct product skew brace, that is, the
skew brace whose underlying set is the Cartesian product \(A\times B\)
and whose operations are defined componentwise.
For \(a,b\in B\), set
\[
a*b=-a+ab-b.
\]
If \(X,Y\subseteq B\), we denote by \(X*Y\) the additive subgroup of
\((B,+)\) generated by all elements
\[
x*y,
\qquad x\in X,\ y\in Y.
\]
We write $B^2=B*B.$  We say that \(B\) is
\emph{trivial} if $B^2=0.$
Equivalently, \(B\) is trivial if the two operations coincide, that is, $ab=a+b$
for all \(a,b\in B\). It is well known that \(B^2\) is the smallest ideal of \(B\) such that $B/B^{2}$ is trivial.
A skew brace \(B\) is called a \emph{left brace} if its additive group
\((B,+)\) is abelian. We say that \(B\) is \emph{abelian} if it is a
left brace and \(B^2=0\). Equivalently, \(B\) is abelian if both
underlying groups are abelian and the two operations coincide. If \(G\) is a group, then
$\operatorname{Triv}(G)=(G,\cdot,\cdot)$
denotes the trivial skew brace over \(G\). Here both the additive and
multiplicative operations are equal to the group operation of \(G\).  We shall also use the opposite skew brace. It is defined by
\[
B^{\operatorname{op}}=(B,+^{\operatorname{op}},\cdot),
\]
where \(+^{\operatorname{op}}\) is the opposite operation of \(+\), that
is, $a+^{\operatorname{op}} b=b+a.$
We set $B^{2,\operatorname{op}}=(B^{\operatorname{op}})^2.$
A skew brace \(B\) is called \emph{almost trivial} if $B^{2,\operatorname{op}}=0.$
Equivalently, \(B\) is almost trivial if
$ab=b+a$
for all \(a,b\in B\). We put $\operatorname{aTriv}(G)=\operatorname{Triv}(G)^{\operatorname{op}}.$

\section{Central products of skew braces} \label{central product of skew braces}

As explained in the introduction, central products will be the main tool for
describing the structure of finite perfect two-sided skew braces. The main
structural theorem of the paper will show that every such skew brace decomposes
as a central product of two canonical ideals. For this reason, we first develop
the notion of central product in the general setting of skew braces, in both an
external and an internal form, and then prove that the two constructions are
equivalent.

\subsection{External and internal central product}

Let $B$ be a skew brace. An ideal \(I\) of \(B\) will be called \emph{central} if \(I\leq Z(B)\). Notice that, if
\(I\leq Z(B)\), then the two operations of \(B\) coincide on \(I\), and \(I\) is a trivial
skew brace. The following definition is inspired by the classical notion of central
product in group theory; see, for instance, \cite{Suzuki1982}.

\begin{definition}[External central product] \label{external central product definition} 
Let \(B_1\) and \(B_2\) be skew braces. Let $I_1\leq Z(B_1)$ and $I_2\leq Z(B_2)$
be central ideals, and let $\alpha:I_1\longrightarrow I_2$
be an isomorphism of skew braces. The \emph{external central product} of \(B_1\) and
\(B_2\) amalgamating \(I_1\) and \(I_2\) through \(\alpha\) is the skew brace
\[
B_1\circ_\alpha B_2
:=
\frac{B_1\times B_2}{N_\alpha},
\]
where
\[
N_\alpha=\{(i,-\alpha(i))\mid i\in I_1\}.
\]
\end{definition}

\begin{remark} \rm
 Since \(I_1\leq Z(B_1)\) and \(I_2\leq Z(B_2)\), the subgroup \(N_\alpha\) is a
central ideal of \(B_1\times B_2\). Thus the quotient above is again a skew brace.  
\end{remark}

\begin{remark} \rm
The external central product \(B_1\circ_\alpha B_2\) can be viewed as a
direct product in which the central ideals \(I_1\) and \(I_2\) are
identified through the isomorphism \(\alpha\). The minus sign in the
definition of \(N_\alpha\) is precisely what imposes this identification.
Indeed, for every \(i\in I_1\), we have
\[
(i,-\alpha(i))\in N_\alpha.
\]
Hence, in the quotient \((B_1\times B_2)/N_\alpha\),
\[
(i,0)+N_\alpha=(0,\alpha(i))+N_\alpha,
\]
because
\[
(i,0)-(0,\alpha(i))=(i,-\alpha(i))\in N_\alpha.
\]
Thus each element \(i\in I_1\) is identified with its image
\(\alpha(i)\in I_2\).
\end{remark}

The following example shows that the central product of skew braces generalizes the well known central product of groups.

\begin{example}[Central products of trivial skew braces] \rm
Let \(G_1\) and \(G_2\) be groups, and let $Z_1\leq Z(G_1)$ and $Z_2\leq Z(G_2)$ be central subgroups. Let $\alpha:Z_1\longrightarrow Z_2$
be an isomorphism of groups. Then 
\[
\operatorname{Triv}(G_1)\circ_\alpha \operatorname{Triv}(G_2) = \operatorname{Triv}((G_1\times G_2)/\{(z,\alpha(z)^{-1})\mid z\in Z_1\})
\]
Thus the notion of central product for skew braces extends the classical central product
of groups.
\end{example}

\begin{definition}[Internal central product] 
Let \(B\) be a skew brace, and let \(I,J\) be ideals of \(B\). We say that \(B\) is the
\emph{internal central product} of \(I\) and \(J\) if
\begin{enumerate}
    \item $B=I+J$;
    \item $[I,J]_{+}=0$;
    \item $I*J=J*I=0$;
    \item $I \cap J \le Z(B)$
\end{enumerate}
In this case we write $B=I\circ J.$
\end{definition}

The next proposition shows that an external central product can be realized as
an internal central product of two naturally embedded ideals. The argument is
formally analogous to the group-theoretic one; we include the proof for
completeness.

\begin{proposition}[External central products are internal]
Let \(B_1\circ_\alpha B_2\) be the external central product of \(B_1\) and
\(B_2\), obtained by amalgamating the central ideals \(I_1\leq Z(B_1)\) and
\(I_2\leq Z(B_2)\) through the isomorphism
$\alpha:I_1\longrightarrow I_2.$ Let
\[
\overline B_1=(B_1\times 0+N_\alpha)/N_\alpha
\qquad\text{and}\qquad
\overline B_2=(0\times B_2+N_\alpha)/N_\alpha.
\]
Then
\[
B_1\circ_\alpha B_2=\overline B_1\circ \overline B_2
\]
and
\[
\overline B_1\cap \overline B_2\simeq I_1\simeq I_2.
\]
\end{proposition}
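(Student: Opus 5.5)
Write \(P=B_1\times B_2\), \(N=N_\alpha\) and \(\pi:P\to P/N=B_1\circ_\alpha B_2\) for the canonical projection, which is a homomorphism of skew braces. The plan is to check the four conditions of the internal central product for \(\overline B_1=\pi(B_1\times 0)\) and \(\overline B_2=\pi(0\times B_2)\), and then to compute their intersection.

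\emph{Ideals.} First, \(B_1\times 0\) and \(0\times B_2\) are ideals of \(P\), because the operations and the \(\lambda\)-maps of the direct product act componentwise. The image of an ideal under a surjective homomorphism of skew braces is again an ideal. Hence \(\overline B_1\) and \(\overline B_2\) are ideals of \(B_1\circ_\alpha B_2\).

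\emph{Conditions (1)--(3).} Each of these can be checked in \(P\) and then pushed down by \(\pi\).
\begin{enumerate}
\item Since \((b_1,b_2)=(b_1,0)+(0,b_2)\), we get \(P=B_1\times 0+0\times B_2\), and therefore \(B_1\circ_\alpha B_2=\overline B_1+\overline B_2\).
\item Elements \((b_1,0)\) and \((0,b_2)\) commute additively, so \([\overline B_1,\overline B_2]_+=0\).
\item We have
\[
(b_1,0)*(0,b_2)=-(b_1,0)+(b_1,b_2)-(0,b_2)=0,
\]
and symmetrically \((0,b_2)*(b_1,0)=0\). Since \(\pi\) preserves \(*\), this gives \(\overline B_1*\overline B_2=\overline B_2*\overline B_1=0\).
\end{enumerate}

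\emph{Intersection.} This step needs the most care. Suppose \(\pi(b_1,0)=\pi(0,b_2)\). Then \((b_1,-b_2)\in N\), so \(b_1=i\) and \(b_2=\alpha(i)\) for some \(i\in I_1\). Conversely, every such pair gives a common element. It follows that
\[
\overline B_1\cap\overline B_2=\pi(I_1\times 0)=\pi(0\times I_2).
\]
Define \(\varphi:I_1\to \overline B_1\cap\overline B_2\) by \(i\mapsto \pi(i,0)\).
\begin{itemize}
\item \(\varphi\) is a homomorphism of skew braces, being the restriction of \(\pi\).
\item \(\varphi\) is surjective by the computation above.
\item \(\varphi\) is injective: \((i,0)\in N\) forces \(\alpha(i)=0\), and hence \(i=0\).
\end{itemize}
So \(\overline B_1\cap\overline B_2\simeq I_1\), and \(I_1\simeq I_2\) via \(\alpha\).

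\emph{Condition (4).} Finally, \((i,0)\) lies in \(Z(B_1)\times 0\subseteq Z(P)\), since the center of a direct product is the product of the centers: centrality in both groups and \(\lambda=\mathrm{id}\) are checked componentwise. Surjective homomorphisms map the center into the center, because each defining condition is an identity preserved by \(\pi\) and \(\pi\) is onto. Hence \(\overline B_1\cap\overline B_2\le Z(B_1\circ_\alpha B_2)\).

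The main obstacle is this last point together with the intersection computation. The sign convention in \(N_\alpha\) must be handled correctly there, and one must be sure that centrality passes to the quotient. The latter relies on surjectivity, and this argument supplies it.
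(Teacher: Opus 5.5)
Your proposal is correct and follows the paper's proof essentially step by step. You push ideals and conditions (1)--(3) down from $B_1\times B_2$ via $\pi$, compute $\overline B_1\cap\overline B_2$ from $(b_1,-b_2)\in N_\alpha$, and get centrality from $(i,0)$ being central in the direct product. You also spell out two points the paper leaves implicit: the injectivity of $i\mapsto\pi(i,0)$, and why a surjective skew brace homomorphism maps the center into the center.
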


\begin{proof}
Put \(P=(B_1\times B_2)/N_\alpha\), where
\(N_\alpha=\{(i,-\alpha(i))\mid i\in I_1\}\), and let
\(\pi:B_1\times B_2\to P\) be the canonical projection. Then
\[
\overline B_1=\pi(B_1\times 0),
\qquad
\overline B_2=\pi(0\times B_2).
\]
Since \(B_1\times 0\) and \(0\times B_2\) are ideals of \(B_1\times B_2\),
their images \(\overline B_1\) and \(\overline B_2\) are ideals of \(P\).
Moreover, every element of \(P\) can be written as
\[
\pi(b_1,b_2)=\pi(b_1,0)+\pi(0,b_2),
\]
and therefore \(P=\overline B_1+\overline B_2\).
The mixed additive commutators and mixed brace products vanish because this is
already true in the direct product \(B_1\times B_2\). Indeed, for
\(b_1\in B_1\) and \(b_2\in B_2\), one has
\[
[(b_1,0),(0,b_2)]_+=0,
\qquad
(b_1,0)*(0,b_2)=0=(0,b_2)*(b_1,0).
\]
Applying the quotient map \(\pi\), we get
\[
[\overline B_1,\overline B_2]_+=0,
\qquad
\overline B_1*\overline B_2=0=\overline B_2*\overline B_1.
\]
It remains to describe the intersection. Let
\(x\in \overline B_1\cap \overline B_2\). Then
\(x=\pi(b_1,0)=\pi(0,b_2)\) for some \(b_1\in B_1\) and \(b_2\in B_2\).
Equivalently, \((b_1,-b_2)\in N_\alpha\). Hence there exists \(i\in I_1\)
such that \((b_1,-b_2)=(i,-\alpha(i))\). Thus \(b_1=i\) and
\(b_2=\alpha(i)\), and so
\[
\overline B_1\cap \overline B_2
=
\{\pi(i,0)\mid i\in I_1\}
=
\{\pi(0,j)\mid j\in I_2\}.
\]
The map \(I_1\to \overline B_1\cap \overline B_2\), \(i\mapsto \pi(i,0)\), is
an isomorphism. Hence
\[
\overline B_1\cap \overline B_2\simeq I_1\simeq I_2.
\]
Finally, this intersection is central in \(P\). Indeed, every element of
\(\overline B_1\cap \overline B_2\) has the form \(\pi(i,0)\) with
\(i\in I_1\). Since \(I_1\leq Z(B_1)\), the element \((i,0)\) is central in
\(B_1\times B_2\), and therefore \(\pi(i,0)\in Z(P)\). Thus
\(\overline B_1\cap \overline B_2\leq Z(P)\).
 Therefore
\(P=\overline B_1\circ \overline B_2\).
\end{proof}

\begin{proposition}[Internal central products are external]
Let \(B=I\circ J\) be an internal central product of two ideals \(I\) and \(J\).
Then \(B\) is isomorphic to the external central product of \(I\) and \(J\)
obtained by amalgamating the common central ideal \(I\cap J\) through the
identity map.
\end{proposition}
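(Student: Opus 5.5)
The plan is to define the map
\[
\varphi: I\times J\longrightarrow B,\qquad \varphi(i,j)=i+j,
\]
show it is a surjective homomorphism of skew braces, and identify its kernel with \(N_{\mathrm{id}}=\{(z,-z)\mid z\in I\cap J\}\). The first isomorphism theorem for skew braces then gives \(B\simeq (I\times J)/N_{\mathrm{id}}=I\circ_{\mathrm{id}}J\).

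First we check that \(I\cap J\) is a legitimate amalgamated ideal. It is an ideal of \(B\), being an intersection of ideals, and it is contained in \(Z(B)\) by condition (4). Hence it lies in \(Z(I)\) and in \(Z(J)\): its elements are central in both groups of \(B\) and have trivial \(\lambda\), so the same holds inside \(I\) and inside \(J\). It is also an ideal of \(I\) and of \(J\). So the external product \(I\circ_{\mathrm{id}}J\) is defined.

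Additivity of \(\varphi\) follows from \([I,J]_+=0\):
\[
\varphi((i,j)+(i',j'))=i+i'+j+j'=i+j+i'+j'=\varphi(i,j)+\varphi(i',j').
\]
Surjectivity is condition (1).

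Multiplicativity is the main point. We need \((i+j)(i'+j')=ii'+jj'\). From \(I*J=0\) we get \(i j=i+j\) for \(i\in I\), \(j\in J\). From \(J*I=0\) we get \(j i=j+i\), and \(j+i=i+j\) by the vanishing additive commutator. So \(ij=ji=i+j\), and \(I\) and \(J\) commute multiplicatively. Then
\[
(i+j)(i'+j')=(ij)(i'j')=i(ji')j'=i(i'j)j'=(ii')(jj')=ii'+jj',
\]
where the last equality uses \(ii'\in I\), \(jj'\in J\) and \(I*J=0\) again. This is exactly \(\varphi\) of the componentwise product \((ii',jj')\).

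For the kernel, \(i+j=0\) means \(i=-j\in I\cap J\). So \(\ker\varphi=\{(z,-z)\mid z\in I\cap J\}=N_{\mathrm{id}}\). This kernel is an ideal, and additive and multiplicative cosets coincide for it, so the first isomorphism theorem applies and gives \(B\simeq I\circ_{\mathrm{id}}J\).

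The only delicate step is the multiplicativity computation. Everything there reduces to \(ij=ji=i+j\), which holds precisely because \(I*J=J*I=0\) and \([I,J]_+=0\).
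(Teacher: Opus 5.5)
Your proposal is correct and follows the paper's proof essentially verbatim: the same map \(\varphi(i,j)=i+j\), additivity from \([I,J]_+=0\), multiplicativity via the rearrangement \(i(ji')j'=i(i'j)j'\) using \(ij=ji=i+j\), and the same kernel computation followed by the first isomorphism theorem. Your extra check that \(I\cap J\) is a central ideal of both \(I\) and \(J\), so that the external product is actually defined, is a point the paper leaves implicit.
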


\begin{proof}
Consider the map
\[
\varphi:I\times J\longrightarrow B,
\qquad
\varphi(i,j)=i+j.
\]
Since \(B=I+J\), the map \(\varphi\) is surjective. We prove that \(\varphi\)
is a skew brace homomorphism. First, we show that \(\varphi\) is additive. Let
\[
i,i'\in I,
\qquad
j,j'\in J.
\]
Since \([I,J]_+=0\), we have
\[
\begin{aligned}
\varphi\bigl((i,j)+(i',j')\bigr)
&=\varphi(i+i',j+j')  \\
&=i+i'+j+j' \\
&=i+j+i'+j' \\
&=\varphi(i,j)+\varphi(i',j').
\end{aligned}
\]

We now show that \(\varphi\) is multiplicative. We have
\[
\begin{aligned}
\varphi\bigl((i,j)(i',j')\bigr)
&=\varphi(ii',jj')  \\
&=ii'+jj' \\
&=ii'jj'
    &&\bigl(ii'\in I,\ jj'\in J,\ \text{and } I*J=0\bigr)\\
&=i(i'j)j' \\
&=i(i'+j)j'
    &&\bigl(i'\in I,\ j\in J,\ \text{and } I*J=0\bigr)\\
&=i(j+i')j'
    &&\bigl([I,J]_+=0\bigr)\\
&=i(ji')j'
    &&\bigl(j\in J,\ i'\in I,\ \text{and } J*I=0\bigr)\\
&=(ij)(i'j') \\
&=(i+j)(i'+j')
    &&\bigl(i,i'\in I,\ j,j'\in J,\ \text{and } I*J=0\bigr)\\
&=\varphi(i,j)\varphi(i',j').
\end{aligned}
\]
Hence \(\varphi\) is a homomorphism of skew braces. It remains to compute its kernel. We have $(i,j)\in \ker(\varphi)$
if and only if $i+j=0.$
This is equivalent to \(i=-j\). Since \(i\in I\) and \(j\in J\), this happens
exactly when
\[
i\in I\cap J
\qquad\text{and}\qquad
j=-i.
\]
Thus
\[
\ker(\varphi)=\{(i,-i)\mid i\in I\cap J\}.
\]
This is precisely the central subgroup used to form the external central
product of \(I\) and \(J\) by amalgamating \(I\cap J\) through the identity map.
Therefore, by the first isomorphism theorem for skew braces,
\[
B\simeq (I\times J)/\{(i,-i)\mid i\in I\cap J\}.
\]
Hence \(B\) is isomorphic to the corresponding external central product.
\end{proof}
\subsection{An explicit example}

We now illustrate the notion of central product through a class of examples
coming from extraspecial \(p\)-braces. The point of the example is the
following: if the bilinear form associated with an extraspecial \(p\)-brace
admits an orthogonal decomposition, then the brace itself decomposes as a
central product of two natural ideals.
We first recall the definition of extraspecial \(p\)-brace from
\cite{BallesterBolinchesEstebanRomeroKurdachenkoPerezCalabuig2025}.

\begin{definition}
A non-abelian left brace \(E\) is an \emph{extraspecial \(p\)-brace} if
\((E,+)\) is an elementary abelian \(p\)-group and there exists an element
\(c\in Z(E)\) such that, putting $C=\langle c\rangle_+,$
the quotient \(E/C\) is a non-zero abelian left brace.
\end{definition}

The structure of extraspecial \(p\)-braces can be described in terms of
bilinear forms over \(\mathbb F_p\). We shall use the following result.

\begin{theorem}
\cite[Theorem~21]{BallesterBolinchesEstebanRomeroKurdachenkoPerezCalabuig2025}
Let \(E\) be an extraspecial \(p\)-brace, and let $C=\langle c\rangle_+\leq Z(E)$
be such that \(E/C\) is non-zero and abelian. Then the map
\[
\varphi:E/C\times E/C\longrightarrow \mathbb F_p
\]
defined by
\[
\varphi(x+C,y+C)=k_{x,y},
\qquad
x*y=k_{x,y}c,
\]
is a bilinear form. Conversely, let \(V\) be a vector space over \(\mathbb F_p\), let
\(C=\langle c\rangle_+\) be a cyclic group of order \(p\), and let
\[
\varphi:V\times V\longrightarrow \mathbb F_p
\]
be a non-zero bilinear form. Then the abelian group $E=V\oplus C$
endowed with the multiplication
\[
(x,k)(y,t)
=
(x+y,k+t+\varphi(x,y)c)
\]
is an extraspecial \(p\)-brace.
\end{theorem}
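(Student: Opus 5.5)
The statement is the theorem quoted from \cite[Theorem~21]{BallesterBolinchesEstebanRomeroKurdachenkoPerezCalabuig2025}. It has two directions, and I will treat them separately. Both reduce to the fact that \(c\) is central, together with the skew brace identity \(a(b+c')=ab-a+ac'\). Since \((E,+)\) is abelian, this identity says that the brace product is additive in its second variable: \(x*(y+z)=x*y+x*z\).

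\textbf{Forward direction.} Since \(E/C\) is abelian, we have \(x*y\in C\) for all \(x,y\in E\). So \(x*y=k_{x,y}c\) for a unique \(k_{x,y}\in\mathbb F_p\), because \(C\) has order \(p\) (as \((E,+)\) is elementary abelian and \(c\neq 0\)).

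First I check that \(k_{x,y}\) depends only on the cosets of \(x\) and \(y\). If \(z\in C\subseteq Z(E)\), then \(z*y=0\) and \(x*z=0\), because \(\lambda_z=\mathrm{id}\) and \(\lambda_x(z)=z\) (ideals are \(\lambda\)-invariant, and \(z\) is fixed since \(z*x\)-type products vanish on the centre). Moreover \(x+z=xz\) for \(z\in Z(E)\).

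Linearity in \(y\) is the additivity of \(*\) in the second variable noted above.

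For linearity in \(x\), I use \((xy)*z=x*(y*z)+y*z+x*z\), which holds in any left brace. Since \(y*z\in C\) is central, \(x*(y*z)=0\). Also \(xy=x+y+x*y\) with \(x*y\) central, so \((xy)*z=(x+y)*z\). Together these give \((x+y)*z=x*z+y*z\). Additivity over \(\mathbb F_p\) then gives bilinearity, since scalars are integers mod \(p\).

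\textbf{Converse.} I define the product \((x,k)(y,t)=(x+y,k+t+\varphi(x,y))\), identifying \(C\) with \(\mathbb F_p\). This is the standard group built from a \(2\)-cocycle: a bilinear form is a cocycle, which gives associativity. The identity is \(0\), and the inverse is \((-x,-k+\varphi(x,x))\).

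The brace identity \(a(b+d)=ab-a+ad\) reduces in the \(C\)-coordinate to \(\varphi(x,y+w)=\varphi(x,y)+\varphi(x,w)\). This is linearity in the second variable; the first coordinate is trivially compatible.

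Next I compute \((x,k)*(y,t)=(0,\varphi(x,y))\). This shows that \(C=0\oplus C\) is central: \(\varphi(0,\cdot)=\varphi(\cdot,0)=0\), so \(\lambda\) is trivial on \(C\) and \(C\) is fixed by every \(\lambda\). It also shows that \(C\) is central in the multiplicative group. The quotient \(E/C\cong V\) is abelian, and it is non-zero because \(\varphi\neq 0\) forces \(V\neq 0\). Finally, \(E\) is non-abelian because \(E^2\ni(0,\varphi(x,y))\neq 0\).

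The main obstacle is the first-variable linearity in the forward direction. It needs the identity \((xy)*z=x*(y*z)+y*z+x*z\), and it needs the centrality of \(C\) to kill the correction terms. Everything else is direct computation.
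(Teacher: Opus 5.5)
The paper gives no proof of this statement. It quotes it from \cite[Theorem~21]{BallesterBolinchesEstebanRomeroKurdachenkoPerezCalabuig2025} and uses it only as a black box in the subsequent example, so there is no argument in the paper to compare yours against. Your proposal is a correct, self-contained verification of both directions. The paper's route costs nothing but relies on the literature; yours costs a page of computation but shows that only the centrality of \(C\) and the left brace identity are used.

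In the forward direction you isolate the right ingredients:
\begin{itemize}
\item additivity of \(x*y\) in \(y\), from the brace identity and commutativity of \((E,+)\);
\item the identity \((xy)*z=x*(y*z)+y*z+x*z\), which holds because \(\lambda_{xy}=\lambda_x\lambda_y\);
\item the vanishing of \(*\) whenever one argument lies in \(Z(E)\).
\end{itemize}
Well-definedness on \(E/C\times E/C\) follows from bi-additivity together with this vanishing. Logically, your well-definedness paragraph therefore belongs after the two additivity steps.

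Three small points should be tightened.
\begin{itemize}
\item Your parenthetical reason for \(\lambda_x(z)=z\) when \(z\in Z(E)\) is not right as stated. \(\lambda\)-invariance of ideals only gives \(\lambda_x(z)\in C\). The correct argument is \(x+\lambda_x(z)=xz=zx=z+\lambda_z(x)=z+x\), using multiplicative centrality and \(\lambda_z=\mathrm{id}\).
\item The step \((xy)*z=(x+y)*z\) silently uses \((a+w)*z=a*z\) for central \(w\). This follows from \(a+w=aw\) and \(\lambda_{aw}=\lambda_a\lambda_w=\lambda_a\), and you should say so.
\item You use \(c\neq 0\) without comment. It holds because otherwise \(E\simeq E/C\) would be abelian.
\end{itemize}

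The converse computations are all correct: cocycle associativity, the inverse \((-x,-k+\varphi(x,x))\), the brace identity reducing to linearity of \(\varphi\) in the second variable, the formula \((x,k)*(y,t)=(0,\varphi(x,y))\), centrality of \(C\), and non-abelianness from \(\varphi\neq 0\).
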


We now show how an orthogonal decomposition of the bilinear form gives rise to
a central product decomposition of the brace.

\begin{example}\rm
Let \(E\) be an extraspecial \(p\)-brace, and let $C=\langle c\rangle_+\leq Z(E)$
be such that \(E/C\) is a non-zero abelian left brace. By
\cite[Theorem~21]{BallesterBolinchesEstebanRomeroKurdachenkoPerezCalabuig2025},
the map
\[
\varphi:E/C\times E/C\longrightarrow \mathbb F_p
\]
defined by
\[
\varphi(x+C,y+C)=k_{x,y}
\qquad
\text{where}
\qquad
x*y=k_{x,y}c
\]
is a bilinear form. Suppose that \(E/C\) admits an orthogonal decomposition with respect to
\(\varphi\), say $E/C=U\oplus W,$ where $\varphi(U,W)=0=\varphi(W,U).$ Let
\[
\pi:E\longrightarrow E/C
\]
be the canonical projection, and set
\[
E_U=\pi^{-1}(U),
\qquad
E_W=\pi^{-1}(W).
\]
We claim that $E=E_U\circ E_W.$ First, \(E_U\) and \(E_W\) are ideals of \(E\). Indeed, since \(E/C\) is an
abelian left brace, the subspaces \(U\) and \(W\) are ideals of \(E/C\).
Therefore their inverse images under \(\pi\) are ideals of \(E\). Next, since $E/C=U\oplus W,$ every element of \(E\) is a sum of an element of \(E_U\) and an element of
\(E_W\). Hence $E=E_U+E_W.$
We now show that the mixed brace products vanish. Let $a\in E_U$ and $b\in E_W.$
Then $a+C\in U$ and $b+C\in W.$
By orthogonality,
\[
\varphi(a+C,b+C)=0=\varphi(b+C,a+C).
\]
By the definition of \(\varphi\), this gives
\[
a*b=0=b*a.
\]
Therefore
\[
E_U*E_W=0=E_W*E_U.
\]

Moreover, since \(E\) is an extraspecial \(p\)-brace, its additive group is
elementary abelian. In particular, \((E,+)\) is abelian, and so $[E_U,E_W]_+=0.$ Finally, we compute the intersection. Since $E/C=U\oplus W,$ we have \(U\cap W=0\). Hence
\[
E_U\cap E_W
=
\pi^{-1}(U)\cap \pi^{-1}(W)
=
\pi^{-1}(U\cap W)
=
C.
\]
Since \(C\leq Z(E)\), it follows that 
\[
E_U\cap E_W\leq Z(E).
\]

\end{example}

\section{Perfect two-sided skew braces} \label{Perfect two-sided skew braces}

In this section we study finite perfect two-sided skew braces. Our aim is to
prove the main structural decomposition announced in the introduction and to
derive its consequences for the additive and multiplicative groups. We begin by
recalling the commutator ideal of a skew brace and by introducing the notion of
perfectness used throughout the paper.

Let \(B\) be a skew brace. The \emph{commutator ideal} of \(B\), denoted by
\([B,B]^B\), is the ideal generated by the additive commutators
\([B,B]_+\), the multiplicative commutators \([B,B]_\cdot\), and the elements
\[
ab-(a+b), \qquad a,b\in B.
\]
Equivalently, \([B,B]^B\) is the smallest ideal \(I\) of \(B\) such that the
quotient \(B/I\) is an abelian skew brace. Since solvability for skew braces is
defined in terms of the commutator ideal, see
\cite{BallesterBolinchesEstebanRomeroJimenezSeralPerezCalabuig2024}, the
following definition is the natural analogue of the usual notion of a perfect
group.
\begin{definition} \label{DefinitionOfperfect}
Let $B$ be a skew brace. We say that $B$ is \emph{perfect} if $B=[B,B]^B.$
\end{definition}

This definition differs from the one given in \cite{CSV2019}, where a skew brace $B$ is called perfect if $B=B^{2}$. In general, these two notions of perfectness do not coincide. For example, if $G$ is a perfect group, then $\operatorname{Triv}(G)$ is perfect in the sense of Definition~\ref{DefinitionOfperfect}, but $\operatorname{Triv}(G) * \operatorname{Triv}(G)=0$. The goal of this section is to study perfect two-sided skew braces. Recall that a skew brace $B$ is called \emph{two-sided} if, for all $a,b,c \in B$, one has
\[
(a+b)c = ac - c + bc.
\]

\subsection{Proof of the main result}

Our first result is a characterization of almost trivial skew braces with perfect additive group, which will be a key ingredient in the proof of Theorem~\ref{ThStructure}:

\begin{lemma}\label{LemmaB*B}
Let \(B\) be a finite two-sided skew brace. Then
$B*B=B$ if and only if \(B\) is almost trivial and \((B,+)\) is a perfect group.
\end{lemma}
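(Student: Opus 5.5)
The converse direction is a direct computation. If \(B\) is almost trivial, then \(ab=b+a\) for all \(a,b\in B\), and hence
\[
a*b=-a+b+a-b,
\]
which is an additive commutator. Every additive commutator arises this way, so \(B*B=[B,B]_+\). If \((B,+)\) is perfect, this gives \(B*B=B\).

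For the forward direction, assume \(B*B=B\). I will work with the opposite product \(a*'b=-b+ab-a\), which is the brace product of \(B^{\operatorname{op}}\), so that \(B^{2,\operatorname{op}}\) is generated by these elements. The two products are related by
\[
ab=b+(a*'b)+a,\qquad a*b=-a+b+(a*'b)+a-b,
\]
so \(a*b\) equals a conjugate of \(a*'b\) plus an additive commutator. The goal is to show that \(J=B^{2,\operatorname{op}}\) is \(0\). I would proceed in three steps.

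\textbf{Step 1: \(J\) is central.} Two-sidedness gives both left and right twisted distributivity of \(*\): \((a+b)*c\) and \(a*(b+c)\) expand as sums of conjugates of \(a*c,b*c\), respectively \(a*b,a*c\). The same holds for \(*'\). Using these expansions, I would show that
\[
(B*B)*J=0,\qquad J*(B*B)=0,\qquad [B*B,J]_+=0,
\]
by checking the identities on generators \(x*y\) and then extending by distributivity. This step is the main obstacle. I would try first to exploit the identity \((a*b)*c = a*(b*c)\)-type associativity available in two-sided braces (the analogue of associativity in a radical ring), combined with the relation between \(*\) and \(*'\) above. Since \(B=B*B\), these identities give \(B*J=J*B=0\) and \(J\le Z(B,+)\). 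In particular \(\lambda\) is trivial on \(J\) and the two operations coincide against \(J\). A similar check, or the symmetric argument applied to \(B^{\operatorname{op}}\) (which is again two-sided), should give \(J\le Z(B,\cdot)\), so that \(J\le Z(B)\).

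\textbf{Step 2: the quotient is almost trivial.} Since \(J\) is an ideal and \(B/J\) has trivial \(*'\)-product, \(B/J\) is almost trivial. Therefore \(ab=b+a+j(a,b)\) with \(j(a,b)\in J\) central. Substituting into the formula for \(a*b\) gives
\[
a*b=[a\text{-}b\text{ additive commutator}]+j(a,b).
\]
Hence \(B=B*B\subseteq [B,B]_++J\) with \(J\) central.

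\textbf{Step 3: conclude \(J=0\) by a finite-group argument.} From Step 2, \([B,B]_+\) has central cokernel in \((B,+)\), so
\[
[B,B]_+=\bigl[[B,B]_++J,\,[B,B]_++J\bigr]_+=[B,B]_+' .
\]
Thus \([B,B]_+\) is perfect and \((B,+)\) is a central product of a perfect group \(P=[B,B]_+\) with \(J\).

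To kill \(J\), I would first show that \(j(a,b)\) is biadditive and alternating-like into the central \(J\), using the two-sided identities. Restricting to \(P\), the map \((a,b)\mapsto j(a,b)\) then factors through \(P/[P,P]=0\) in each variable, so \(j\) vanishes on \(P\times P\). Together with \(B=P+J\) and \(J\) central, this forces \(j\equiv 0\). Hence \(J=0\), so \(B\) is almost trivial, and \((B,+)=B*B=[B,B]_+\) is perfect.
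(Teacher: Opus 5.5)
Your converse direction is correct and matches the paper's. The forward direction has a genuine gap at Step~1, and the route as described cannot work. Finiteness of $B$ is never genuinely used in your proposal, yet the lemma is false for infinite two-sided braces. Let $\mathfrak{m}$ be the maximal ideal of the Puiseux series ring $\bigcup_{n\ge1}k[[t^{1/n}]]$. It is a Jacobson radical ring with $\mathfrak{m}^2=\mathfrak{m}$. So $(\mathfrak{m},+,\circ)$ with $a\circ b=a+b+ab$ is a two-sided brace with $B*B=B$, and its additive group is abelian and nonzero. In this brace $*$ is the ring product and $J=B^{2,\operatorname{op}}=B$. Hence $(B*B)*J$ and $J*(B*B)$ are nonzero, and $B*J\neq 0$, so $J\not\le Z(B)$.

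Even for finite braces, $(B*B)*J=0$ and $J*(B*B)=0$ are not identities of two-sided braces: for $R=x\mathbb{F}_p[x]/(x^5)$ they would say $R^4=0$. So checking them on generators $x*y$ and extending by distributivity cannot succeed. The radical-ring associativity of $*$ you want to invoke is also only available up to additive conjugations when $(B,+)$ is non-abelian. Of your Step~1 relations, only $[B*B,J]_+=0$ holds in every two-sided skew brace; this is Trappeniers' Lemma~4.1, cited in the paper. Your Step~3 also closes only because Step~1 supplies $j(z,b)=z*'b=0$ for $z\in J$. In the Puiseux example $P=[B,B]_+=0$, and $j$ is the nonzero ring multiplication on $J\times B$.

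The missing idea is to use finiteness \emph{first}, to prove that $(B,+)$ is perfect. Since $B$ is two-sided, $C=[B,B]_+$ is an ideal (Trappeniers, Corollary~2.4). Then $B/C$ is a two-sided brace with abelian additive group, i.e.\ a finite Jacobson radical ring $R$, and $R^2=R$ because $B*B=B$ passes to the quotient. A finite radical ring is nilpotent, so $R=0$ and $B=C$. Next, $[B*B,J]_+=0$ together with $B=B*B$ gives $J\le Z(B,+)$. For each $b$, the map $a\mapsto a*'b$ is then a homomorphism from the perfect group $(B,+)$ to the abelian group $(J,+)$; this is exactly your first-variable additivity of $j$. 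Hence it is zero, and $J=0$. This is the paper's proof. It needs neither $J*B=0$ nor $J\le Z(B,\cdot)$, and it makes your Step~2 and the subgroup $P$ unnecessary.
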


\begin{proof}
Suppose first that \(B\) is almost trivial and that \((B,+)\) is perfect.
Since \(B\) is almost trivial, for all \(a,b\in B\) we have
\[
a*b=-a+(b+a)-b=[-a,b]_+.
\]
Hence \(B*B=[B,B]_+\). Since \((B,+)\) is perfect, it follows that
\[
B*B=[B,B]_+=B.
\]

Conversely, assume that \(B\) is a two-sided skew brace such that $B=B*B.$
We prove first that the additive group \((B,+)\) is perfect. Let $C=[(B,+),(B,+)]_+$
be the derived subgroup of the additive group. Since \(B\) is two-sided, by
\cite[Corollary~2.4]{Trappeniers2023}, the subgroup \(C\) is an ideal of \(B\).
Therefore the quotient $\overline B=B/C$
is again a two-sided skew brace. Its additive group is abelian. Hence, by
\cite[Theorem~8.1.20]{CV}, $R=(\overline B,+,*)$
is a Jacobson radical ring, whose adjoint group is $(R,\cdot)=(\overline B,\cdot).$
Since \(B*B=B\), passing to the quotient gives
$\overline B*\overline B=\overline B.$
In the ring \(R\), this means
\[
R^2=R.
\]
Since \(B\) is finite, also \(R\) is finite, and therefore \(R\) is Artinian.
Thus, by \cite[Theorem~3.3.5]{CV}, the Jacobson radical \(J(R)\) is nilpotent.
But \(R\) is a Jacobson radical ring, so
$J(R)=R.$
Hence \(R\) is nilpotent: there exists \(n\geq 1\) such that $R^n=0.$
On the other hand, from \(R^2=R\) it follows inductively that
\[
R^m=R
\qquad
\text{for every } m\geq 1.
\]
Therefore \(\overline B=0\). Hence $B=C=[(B,+),(B,+)]_+,$ and so the additive group \((B,+)\) is perfect. It remains to prove that \(B\) is almost trivial. Since \(B\) is two-sided, by
\cite[Lemma~4.1]{Trappeniers2023} we have
\[
B^{2,\operatorname{op}}\leq C_{(B,+)}(B*B).
\]
As \(B=B*B\), this gives
\[
B^{2,\operatorname{op}}\leq C_{(B,+)}(B)=Z(B,+).
\]
Fix \(b\in B\). Consider the map
\[
\varphi_b:(B,+)\longrightarrow B^{2,\operatorname{op}},
\qquad
\varphi_b(a)=a*_{\operatorname{op}}b.
\]
We claim that \(\varphi_b\) is a group homomorphism. Let \(a,c\in B\). Since
\(B\) is two-sided, we have
\[
(a+c)b=ab-b+cb.
\]
Therefore
\[
\begin{aligned}
\varphi_b(a+c)
&=(a+c)*_{\operatorname{op}}b \\
&=-b+(a+c)b-(a+c)
    &&\bigl(\text{definition of } *_{\operatorname{op}}\bigr)\\
&=-b+ab-b+cb-c-a
    &&\bigl((a+c)b=ab-b+cb\bigr)\\
&=-b+ab-a+a-b+cb-c-a
    &&\bigl(\text{insert } -a+a\bigr)\\
&=a*_{\operatorname{op}}b+a+c*_{\operatorname{op}}b-a
    &&\bigl(\text{definition of } *_{\operatorname{op}}\bigr)\\
&=a*_{\operatorname{op}}b+c*_{\operatorname{op}}b
    &&\bigl(c*_{\operatorname{op}}b\in B^{2,\operatorname{op}}\leq Z(B,+)\bigr)\\
&=\varphi_b(a)+\varphi_b(c)
    &&\bigl(\text{definition of } \varphi_b\bigr).
\end{aligned}
\]
Thus \(\varphi_b\) is a group homomorphism.
Now \((B,+)\) is perfect, whereas $B^{2,\operatorname{op}}\leq Z(B,+)$
implies that \((B^{2,\operatorname{op}},+)\) is abelian. Therefore every group
homomorphism from \((B,+)\) to \((B^{2,\operatorname{op}},+)\) is trivial.
Hence $\varphi_b=0.$ Since \(b\in B\) was arbitrary, we get
\[
a*_{\operatorname{op}}b=0
\qquad
\text{for all } a,b\in B.
\]
Thus $B^{2,\operatorname{op}}=0,$
and \(B\) is almost trivial.
\end{proof}

\begin{lemma}\label{lemma(B,+)perfect} Let $B$ be a finite perfect two-sided skew brace. Then the additive group $(B,+)$ is perfect.
\end{lemma}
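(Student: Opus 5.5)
Let \(C=[(B,+),(B,+)]_+\) be the derived subgroup of the additive group. The plan is to show that \(C=B\) by proving that \(B/C\) is an abelian skew brace; perfectness then gives \([B,B]^B\le C\), hence \(B=C\).

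\textbf{Step 1: pass to the ring.} As in the proof of Lemma~\ref{LemmaB*B}, \cite[Corollary~2.4]{Trappeniers2023} shows that \(C\) is an ideal, so \(\overline B=B/C\) is a finite two-sided skew brace with abelian additive group. By \cite[Theorem~8.1.20]{CV}, \(R=(\overline B,+,*)\) is a Jacobson radical ring. Since \(R\) is finite, \cite[Theorem~3.3.5]{CV} makes it nilpotent, say \(R^n=0\).

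\textbf{Step 2: \(\overline B\) is perfect.} Perfectness passes to quotients. The commutator ideal of \(\overline B\) is the image of \([B,B]^B\), since the image of an ideal under the surjection \(B\to\overline B\) is an ideal and the abelianization of \(\overline B\) is a quotient of \(B\). So \(\overline B=[\overline B,\overline B]^{\overline B}\).

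\textbf{Step 3: the key step.} Now identify the commutator ideal of \(\overline B\) in ring terms. The additive commutators vanish. The elements \(ab-(a+b)\) are exactly the products \(a*b\) in \(R\) (in the two-sided setting \(B^2\) is an ideal). Multiplicative commutators lie in \(R^2\), because the adjoint group satisfies \(ab-ba=a*b-b*a\). Hence the commutator ideal of \(\overline B\) is contained in the ideal \(R^2=\overline B*\overline B\). To justify this, I expect to check directly that \(\overline B/R^2\) is abelian: its additive group is abelian and its products are zero, so its two operations coincide and its multiplicative group is abelian as well.

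Combining Steps 2 and 3 gives \(R=R^2\). Nilpotency then forces \(R=R^m=0\) for all \(m\), so \(\overline B=0\) and \(B=C\). Thus \((B,+)\) is perfect. The same argument as in Lemma~\ref{LemmaB*B} applies, with the hypothesis \(B*B=B\) replaced by \(B=[B,B]^B\).
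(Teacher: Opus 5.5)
Your proof is correct and follows the paper's argument. You pass to \(\overline B=B/C\), show that the commutator ideal of \(\overline B\) lies in \(\overline B^2\) by the same check that \(\overline B/\overline B^2\) is abelian, and conclude \(\overline B=\overline B^2\); the only difference is that you finish with the nilpotency argument for the finite radical ring directly, whereas the paper applies Lemma~\ref{LemmaB*B} to \(\overline B\), whose proof contains exactly that argument.
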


\begin{proof}
Let $C=[(B,+),(B,+)]_+$ be the derived subgroup of the additive group. Since \(B\) is two-sided, by
\cite[Corollary~2.4]{Trappeniers2023}, the subgroup \(C\) is an ideal of \(B\).
Hence we may consider the quotient skew brace
$\overline B=B/C.$ We prove that \(\overline B=0\). Since \(B\) is perfect as a skew brace, we have $B=[B,B]^B.$
Passing to the quotient gives $\overline B=[\overline B,\overline B]^{\overline B}.$
On the other hand, by construction, the additive group \((\overline B,+)\) is
abelian. Therefore all additive commutators in \(\overline B\) are trivial. We now show that every remaining generator of the commutator ideal $[\overline B,\overline B]^{\overline B}$
belongs to \(\overline B^2\). First, since \((\overline B,+)\) is abelian, for
all \(a,b\in B\) we have
\[
\begin{aligned}
\bar a*\bar b
&=
-\bar a+\bar a\cdot \bar b-\bar b \\
&=
\bar a\cdot\bar b-(\bar a+\bar b) \\
&=
\overline{a\cdot b-(a+b)}.
\end{aligned}
\]
Thus every element of the form $\overline{a\cdot b-(a+b)}$
belongs to \(\overline B^2\).
Next we prove that the multiplicative commutators also lie in
\(\overline B^2\). Indeed, modulo \(\overline B^2\), one has
\[
\bar a\cdot \bar b
=
\bar a+\bar b+\bar a*\bar b
\equiv
\bar a+\bar b
=
\bar b+\bar a
\equiv
\bar b\cdot \bar a.
\]
Hence the multiplicative group of the quotient
$\overline B/\overline B^2$ is abelian. Therefore $[\overline B,\overline B]_\cdot\leq \overline B^2.$
We have shown that all generators of $[\overline B,\overline B]^{\overline B}$
belong to \(\overline B^2\). Hence $[\overline B,\overline B]^{\overline B}\leq \overline B^2.$
Since $\overline B=[\overline B,\overline B]^{\overline B},$
we obtain $\overline B=\overline B^2.$
Now Lemma~\ref{LemmaB*B} applies to the finite two-sided skew brace
\(\overline B\). It gives that the additive group \((\overline B,+)\) is
perfect. But \((\overline B,+)\) is abelian by construction. Hence $\overline B=0.$
Therefore $B=C=[(B,+),(B,+)]_+,$
and so the additive group \((B,+)\) is perfect.
\end{proof}

\begin{lemma} \label{f_i homomorphism}
    Let $B$ be a skew brace and $I \trianglelefteq B$ with $I \le Z(B,+)$. For every $i \in I$ the map:
    \[
f_i:(B,+)\longrightarrow (I,+),
\qquad
f_i(b)=i*b.
\]
is a group homomorphism.
\end{lemma}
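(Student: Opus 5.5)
We need to show $f_i(b+c)=f_i(b)+f_i(c)$, where $f_i(b)=i*b=-i+ib-b=\lambda_i(b)-b$. The plan is to use the left brace identity applied to $i(b+c)$, then use the hypothesis $I\le Z(B,+)$ to rearrange, and finally identify the terms.

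First we note that $i*b\in I$ for every $b\in B$, so $f_i$ really takes values in $I$. Indeed, $i*b=\lambda_i(b)-b$. Since $I$ is an ideal, $(I,\cdot)$ is normal in $(B,\cdot)$, so $ib=b(b^{-1}ib)\in bI=b+I$. Because the additive and multiplicative cosets coincide, $-i+ib\in -i+b+I$. As $I$ is central in $(B,+)$, this equals $b+I$, and hence $-i+ib-b\in I$. Alternatively, one can argue in $B/I$, where $\bar i=0$ gives $\overline{i*b}=0*\bar b=0$.

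Next comes the computation. By the left brace identity, $i(b+c)=ib-i+ic$. Therefore
\[
f_i(b+c)=-i+ib-i+ic-c-b.
\]
We write $ib-i=i+(-i+ib-b)+b-i$, that is, $-i+ib=f_i(b)+b$. Likewise $-i+ic=f_i(c)+c$. Substituting,
\[
f_i(b+c)=(-i+ib)+(-i+ic)-c-b=f_i(b)+b+f_i(c)+c-c-b=f_i(b)+b+f_i(c)-b.
\]
Since $f_i(c)\in I\le Z(B,+)$, conjugation by $b$ fixes it, and we obtain $f_i(b)+f_i(c)$.

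The only delicate point is the membership $i*b\in I$, because the centrality step requires it. The remaining steps are routine rearrangements; note that the centrality of $i$ itself is never needed beyond this membership.
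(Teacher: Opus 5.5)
Your proof is correct and is essentially the paper's argument: the paper also notes $i*b\in I$ because $I$ is an ideal, and uses the identity $i*(b+c)=i*b+b+i*c-b$ (which you derive explicitly from $i(b+c)=ib-i+ic$). Like the paper, you then remove the conjugation by $b$ using $i*c\in I\le Z(B,+)$; your justification of $i*b\in I$ is simply more detailed than the paper's, and in fact needs only normality of $I$, not centrality.
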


\begin{proof}
Since \(I\) is an ideal of \(B\), for every \(b\in B\) we have $i*b\in I.$ Thus the map $f_{i}$ is well defined.
We claim that \(f_i\) is a group homomorphism. Indeed, for \(b,c\in B\),
using the left brace identity we get $i*(b+c)=i*b+b+i*c-b.$ Since \(i*c\in I\leq Z(B,+)\) we obtain:
\[
f_i(b+c)=i*(b+c)=i*b+i*c=f_i(b)+f_i(c).
\]
Thus \(f_i\) is a group homomorphism.
\end{proof}

\begin{lemma} \label{g_i homomorphism}
     Let $B$ be a two-sided skew brace and $I \trianglelefteq B$ with $I \le Z(B,+)$. For every $i \in I$ the map:
    \[
g_i:(B,+)\longrightarrow (I,+),
\qquad
g_i(b)=b*i.
\]
is a group homomorphism.
\end{lemma}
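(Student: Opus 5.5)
The plan mirrors the proof of Lemma~\ref{f_i homomorphism}, with the right brace identity in place of the left one. First, $g_i$ is well defined: since $I$ is an ideal, $b*i\in I$ for all $b\in B$. This uses the standard fact that $B*I\subseteq I$ for an ideal $I$. Indeed, $b*i=\lambda_b(i)-i$, and $\lambda_b(I)\subseteq I$ by the definition of ideal.

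Next I derive a right-distributivity formula for $*$ from two-sidedness. For $b,c\in B$, the identity $(b+c)i=bi-i+ci$ gives
\[
(b+c)*i=-(b+c)+(b+c)i-i=-c-b+bi-i+ci-i.
\]
Rewriting $bi-i=b+(b*i)$ and $ci-i=c+(c*i)$, this becomes
\[
(b+c)*i=-c-b+b+b*i+c+c*i=-c+b*i+c+c*i.
\]
This is the analogue of the identity $i*(b+c)=i*b+b+i*c-b$ used earlier, with a conjugation by $c$ in place of one by $b$. The computation is the same one as for $\varphi_b$ in the proof of Lemma~\ref{LemmaB*B}, with the roles of the two arguments exchanged.

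Finally, $b*i\in I\le Z(B,+)$, so conjugation by $c$ fixes it, and therefore
\[
(b+c)*i=b*i+c*i,
\]
that is, $g_i(b+c)=g_i(b)+g_i(c)$. The only step needing care is the bookkeeping in the two-sided expansion, specifically the order of terms in the non-abelian group $(B,+)$. The centrality hypothesis $I\le Z(B,+)$ then removes the conjugation. No finiteness or perfectness assumption is needed.
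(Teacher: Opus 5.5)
Your proof is correct and follows the paper's argument: you expand $(b+c)*i$ with the right brace identity $(b+c)i=bi-i+ci$, recognize $-b+bi-i=b*i$, and use $b*i\in I\le Z(B,+)$ to cancel the conjugation by $c$. Your explicit check that $g_i$ is well defined, via $b*i=\lambda_b(i)-i\in I$, is a small addition that the paper leaves implicit in this lemma.
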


\begin{proof}
 Let \(b,c\in B\). Since \(B\)
is two-sided and $I \le Z(B,+)$ we have:
\[
\begin{aligned}
g_{i}(b+c)
&=(b+c)*i \\
&=-(b+c)+(b+c)i-i \\
&=-c-b+bi-i+ci-i\\
&=-c+b*i+ci-i\\
&=b*i +c*i\\
&=g_{i}(b)+g_{i}(c) 
\end{aligned}
\]
Thus \(g_i\) is a group homomorphism.
\end{proof}

\begin{theorem} \label{ThStructure}
Let \(B\) be a finite perfect two-sided skew brace. Then:
\begin{enumerate}
    \item \label{point1} \(B=B^{2} \circ B^{2,\operatorname{op}}\);
    \item \label{point2} \(B^2\) is almost trivial with perfect additive group;
    \item \label{point3} \(B^{2,\operatorname{op}}\) is trivial with perfect additive group.
\end{enumerate}
\end{theorem}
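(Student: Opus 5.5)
The plan is to set \(I=B^2\) and \(J=B^{2,\operatorname{op}}\) and check the four conditions of an internal central product one at a time. After that we read off the structure of \(I\) and \(J\). Both are ideals; for \(J\), apply the statement for \(B^2\) to \(B^{\operatorname{op}}\), which is again a two-sided skew brace. The opposite skew brace is also perfect, since it has the same ideals and the same abelian quotients. This \(B\leftrightarrow B^{\operatorname{op}}\) symmetry will let us prove each statement for one of \(I,J\) and transfer it to the other. Throughout we use Lemma~\ref{lemma(B,+)perfect}: \((B,+)\) is perfect.

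\emph{Step 1: \(B=I+J\).} The quotient \(B/(I+J)\) is both trivial (it is a quotient of \(B/I\)) and almost trivial (it is a quotient of \(B/J\)). So \(a+b=ab=b+a\) there, and its additive group is abelian. Since \((B,+)\) is perfect, \(B/(I+J)=0\).

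\emph{Step 2: \([I,J]_+=0\) and \(I\cap J\le Z(B)\).} By \cite[Lemma~4.1]{Trappeniers2023}, \(J\le C_{(B,+)}(B*B)=C_{(B,+)}(I)\), so \([I,J]_+=0\). Hence \(K=I\cap J\) centralizes \(I+J=B\) additively, i.e.\ \(K\le Z(B,+)\), and \(K\) is an ideal. For \(k\in K\), Lemmas~\ref{f_i homomorphism} and~\ref{g_i homomorphism} give homomorphisms \(f_k,g_k\) from the perfect group \((B,+)\) to the abelian group \((K,+)\), so both vanish. Thus \(kb=k+b\) and \(bk=b+k=k+b\) for all \(b\in B\). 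This gives \(\lambda_k=\mathrm{id}\) and \(k\in Z(B,\cdot)\), so \(K\le Z(B)\).

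\emph{Step 3: \(I*J=J*I=0\).} This is the main obstacle, since we do not yet know that \(I\) or \(J\) is additively perfect. For \(a\in I\) and \(b\in J\), \(a*b\) lies in both ideals, hence in \(K\), which is central. Fix \(a\in I\). By the left brace identity, \(b\mapsto a*b\) is a homomorphism \((J,+)\to(K,+)\), exactly as in Lemma~\ref{f_i homomorphism}, and \(K\) is abelian. It vanishes on \(K\), because \(a*k=-a+ak-k=-a+a+k-k=0\) for central \(k\). It also vanishes on \([J,J]_+\). Now \(B=I+J\) with \([I,J]_+=0\), so \([B,B]_+=[I,I]_++[J,J]_+\), and perfectness of \((B,+)\) gives \(B=[I,I]_++[J,J]_+\). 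Hence \(J=[J,J]_++(J\cap I)=[J,J]_++K\), and the map is zero, so \(I*J=0\). For \(J*I\) we fix \(b\in J\) and use \(a\mapsto b*a\) on \(I\), a homomorphism by Lemma~\ref{f_i homomorphism}. By the symmetric argument \(I=[I,I]_++K\), so this map vanishes too. This proves part~\ref{point1}.

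\emph{Step 4: parts~\ref{point2} and~\ref{point3}.} For fixed \(a'\in I\), the computation in Lemma~\ref{g_i homomorphism} (which uses two-sidedness) makes \(a\mapsto a*_{\operatorname{op}}a'\) a homomorphism \(I\to K\). Here \(a*_{\operatorname{op}}a'\) lies in \(I\) because \(I\) is an ideal, and in \(J\) by definition, so it lies in \(K\), which is central. This map kills \([I,I]_+\) and \(K\), so it is zero, and \(I\) is almost trivial. Symmetrically, \(J\) is trivial: \(b\mapsto b*b'\) is zero on \([J,J]_++K=J\). For perfectness we expand \(B*B=(I+J)*(I+J)\). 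The mixed terms vanish by Step 3, \(J*J=0\) because \(J\) is trivial, and \(I*I=[I,I]_+\) because \(I\) is almost trivial. Hence \(I=B*B=[I,I]_+\), so \((I,+)\) is perfect. Applying the same argument to \(B^{\operatorname{op}}\), whose square is \(J\), shows that \((J,+)\) is perfect. The only care needed here is to confirm that the additive commutator subgroup is the same in \((B,+)\) and in \((B,+^{\operatorname{op}})\), which it is.
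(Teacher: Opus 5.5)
Your argument is correct, and from Step 2 on it departs from the paper's in an instructive way.

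You get $K\le Z(B,+)$ directly from $[I,J]_+=0$ and $B=I+J$, where the paper cites \cite[Theorem~4.3]{Trappeniers2023}.

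For the mixed products, the paper fixes $c\in B^{2,\operatorname{op}}$ and uses $x\mapsto x*c$ on all of $(B,+)$. Its values lie in $B^2\cap B^{2,\operatorname{op}}=K$, two-sidedness makes the map additive, and perfectness of $(B,+)$ kills it. So the obstacle you point to (not yet knowing that $I$ or $J$ is perfect) is sidestepped by taking all of $B$ as the domain. This also yields the stronger statement $B*B^{2,\operatorname{op}}=0$, from which triviality of $B^{2,\operatorname{op}}$ is immediate. The paper then gets $J*I=0$ by passing to $B^{\operatorname{op}}$.

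For part~2, the paper proves $B^2=B^2*B^2$ by the same expansion as yours. It then re-applies Lemma~\ref{LemmaB*B} to the subbrace $B^2$, which means running the Jacobson radical ring argument a second time.

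Your route instead rests on one structural fact: $I=[I,I]_++K$ and $J=[J,J]_++K$, obtained from $[B,B]_+=[I,I]_++[J,J]_+$ and Dedekind's law. After that, a single device handles everything uniformly: a homomorphism from $I$ or $J$ into the abelian group $K$ that also kills $K$. It gives $I*J=0$ and $J*I=0$ using only the left brace identity, and it gives almost triviality of $I$ and triviality of $J$.

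What each buys: your proof of the theorem is more self-contained, needing neither Theorem~4.3 nor a second use of Lemma~\ref{LemmaB*B}. The paper's version is shorter and gives the stronger annihilation statement $B*B^{2,\operatorname{op}}=0$.

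One minor point of precision. When you invoke Lemmas~\ref{f_i homomorphism} and~\ref{g_i homomorphism} with a fixed element outside $K$, you are really using their computation, not their statement. That computation only requires the values of the map to lie in $Z(B,+)$, which holds here because they lie in $K$.
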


\begin{proof}
We prove the theorem in several steps.

\medskip

\noindent
\textbf{Step 1: The additive decomposition.}
Put $I=B^2\cap B^{2,\operatorname{op}}.$
We first show that $B=B^2+B^{2,\operatorname{op}}.$
Indeed, the quotient $B/(B^2+B^{2,\operatorname{op}})$
is both trivial and almost trivial. Hence its additive group is abelian.
On the other hand, by Lemma~\ref{lemma(B,+)perfect}, the additive group
\((B,+)\) is perfect. Therefore every additive quotient of \(B\) which is
abelian is trivial. It follows that $B/(B^2+B^{2,\operatorname{op}})=0,$
and hence

\[
B=B^2+B^{2,\operatorname{op}}.
\]

\medskip

\noindent
\textbf{Step 2: The intersection is central.}
We now prove that
\[
I=B^2\cap B^{2,\operatorname{op}}\leq Z(B).
\]
Since \(B\) is two-sided, by \cite[Theorem~4.3]{Trappeniers2023} we have $I\leq Z(B^2+B^{2,\operatorname{op}},+).$
Using Step~1, this gives $I\leq Z(B,+).$
Let \(i\in I\). Since \(I\leq Z(B,+)\), Lemma~\ref{f_i homomorphism} implies
that the map
\[
f_i:(B,+)\longrightarrow (I,+),
\qquad
f_i(b)=i*b,
\]
is a group homomorphism. The group \((B,+)\) is perfect by
Lemma~\ref{lemma(B,+)perfect}, whereas \((I,+)\) is abelian because
\(I\leq Z(B,+)\). Hence \(f_i=0\). Thus
\[
i*b=0
\qquad
\text{for every } b\in B.
\]
Equivalently, \(\lambda_i=\operatorname{id}_B\). Since also
\(i\in Z(B,+)\), we obtain $i\in \operatorname{Soc}(B).$
It remains to show that \(i\) is central in the multiplicative group.
Again, since \(I\leq Z(B,+)\), Lemma~\ref{g_i homomorphism} implies that
\[
g_i:(B,+)\longrightarrow (I,+),
\qquad
g_i(b)=b*i,
\]
is a group homomorphism. As above, the domain is perfect and the codomain is
abelian, so \(g_i=0\). Therefore
\[
b*i=0
\qquad
\text{for every } b\in B.
\]
This implies that \(i\in Z(B,\cdot)\). Hence \(i\in Z(B)\). Since \(i\in I\)
was arbitrary, we have proved that
\[
B^2\cap B^{2,\operatorname{op}}\leq Z(B).
\]

\medskip

\noindent
\textbf{Step 3: The mixed products vanish.}
We next prove that the two summands \(B^2\) and \(B^{2,\operatorname{op}}\)
satisfy the defining relations of an internal central product. First, since \(B\) is two-sided, by \cite[Lemma~4.1]{Trappeniers2023} we have $[B^2,B^{2,\operatorname{op}}]_+=0.$
It remains to prove that
\[
B^2*B^{2,\operatorname{op}}=0
\qquad
\text{and}
\qquad
B^{2,\operatorname{op}}*B^2=0.
\]
We start with the first equality. Let \(c\in B^{2,\operatorname{op}}\). For
every \(x\in B\), we have \(x*c\in B^2\) by definition of \(B^2\). Moreover,
since \(B^{2,\operatorname{op}}\) is an ideal and \(c\in B^{2,\operatorname{op}}\),
we also have $x*c\in B^{2,\operatorname{op}}.$
Therefore $x*c\in B^2\cap B^{2,\operatorname{op}}=I.$
Thus the map
\[
\theta_c:(B,+)\longrightarrow (I,+),
\qquad
\theta_c(x)=x*c,
\]
is well defined. As in Lemma~\ref{g_i homomorphism}, the two-sidedness of \(B\)
and the inclusion \(I\leq Z(B,+)\) imply that \(\theta_c\) is a group
homomorphism. Since \((B,+)\) is perfect and \((I,+)\) is abelian, we obtain $\theta_c=0.$
Hence
\[
x*c=0
\qquad
\text{for every } x\in B.
\]
In particular, $B^2*B^{2,\operatorname{op}}=0.$
We now prove the opposite mixed product. Consider the opposite skew brace
$C=B^{\operatorname{op}}.$
Then \(C\) is again a finite perfect two-sided skew brace, and
\[
C^2=B^{2,\operatorname{op}},
\qquad
C^{2,\operatorname{op}}=B^2.
\]
Applying the previous argument to \(C\), we obtain $C^2*C^{2,\operatorname{op}}=0.$
In terms of \(B\), this says that $B^{2,\operatorname{op}}*_{\operatorname{op}}B^2=0.$ Let \(a\in B^{2,\operatorname{op}}\) and \(b\in B^2\). Since $a*_{\operatorname{op}}b=0,$
we have \(ab=b+a\). Therefore
\[
\begin{aligned}
a*b
&=-a+ab-b \\
&=-a+(b+a)-b
    &&\bigl(a*_{\operatorname{op}}b=0,\ \text{so } ab=b+a\bigr)\\
&=-a+(a+b)-b
    &&\bigl([B^2,B^{2,\operatorname{op}}]_+=0,\ \text{so } b+a=a+b\bigr)\\
&=0.
\end{aligned}
\]
Thus $B^{2,\operatorname{op}}*B^2=0.$ Combining Step~1, Step~2, and the equalities just proved, we conclude that $B=B^2\circ B^{2,\operatorname{op}}.$
This proves item~\ref{point1}.

\medskip

\noindent
\textbf{Step 4: The component \(B^2\).}
We now prove item~\ref{point2}. In Step~3 we proved that, for every
\(c\in B^{2,\operatorname{op}}\),
\[
x*c=0
\qquad
\text{for every } x\in B.
\]
Equivalently,
\[
B*B^{2,\operatorname{op}}=0.
\]

Let \(x,y\in B\). By Step~1, we may write
\[
x=c+a,
\qquad
y=c'+a',
\]
with
\[
c,c'\in B^2,
\qquad
a,a'\in B^{2,\operatorname{op}}.
\]
Recall that, for elements of the additive group, we write
\[
u^v=-v+u+v.
\]
Using the left brace identity, the two-sided identity, and the mixed product
relations proved above, we compute:
\[
\begin{aligned}
x*y
&=(c+a)*(c'+a') \\
&=(c+a)*c' + c' + (c+a)*a' - c'
    &&\bigl(\text{left brace identity}\bigr)\\
&=(c+a)*c'
    &&\bigl(B*B^{2,\operatorname{op}}=0\bigr)\\
&=(c*c')^a+a*c'
    &&\bigl(\text{two-sided identity}\bigr)\\
&=(c*c')^a
    &&\bigl(B^{2,\operatorname{op}}*B^2=0\bigr)\\
&=c*c'
    &&\bigl([B^2,B^{2,\operatorname{op}}]_+=0\bigr).
\end{aligned}
\]
Thus every product \(x*y\) belongs to \(B^2*B^2\). Since \(B^2\) is generated
additively by all such elements \(x*y\), we get
$B^2\leq B^2*B^2.$ The reverse inclusion is immediate, so $B^2=B^2*B^2.$
Applying Lemma~\ref{LemmaB*B} to the finite two-sided skew brace \(B^2\), we
deduce that \(B^2\) is almost trivial and that its additive group \((B^2,+)\)
is perfect. This proves item~\ref{point2}.

\medskip

\noindent
\textbf{Step 5: The component \(B^{2,\operatorname{op}}\).}
Finally, we prove item~\ref{point3}. Apply the argument of Step~4 to the
opposite skew brace \(B^{\operatorname{op}}\). Since \(B^{\operatorname{op}}\)
is again a finite perfect two-sided skew brace and $(B^{\operatorname{op}})^2=B^{2,\operatorname{op}},$
we obtain
\[
B^{2,\operatorname{op}}
=
B^{2,\operatorname{op}}*_{\operatorname{op}}B^{2,\operatorname{op}}.
\]
By Lemma~\ref{LemmaB*B}, applied inside \(B^{\operatorname{op}}\), it follows
that the additive group $(B^{2,\operatorname{op}},+)$
is perfect. On the other hand, Step~3 gives
$B*B^{2,\operatorname{op}}=0.$ In particular,
\[
B^{2,\operatorname{op}}*B^{2,\operatorname{op}}=0.
\]
Therefore \(B^{2,\operatorname{op}}\) is trivial. This proves
item~\ref{point3}, and the theorem follows.
\end{proof}

\subsection{Some consequences of Theorem \ref{ThStructure}}

As a first consequence, for a two-sided skew brace, perfectness of the skew brace is equivalent to perfectness of both of its underlying groups.

\begin{corollary}\label{CorollaryEquivalentconditions}
Let \(B\) be a finite two-sided skew brace. Then the following conditions are equivalent:
\begin{enumerate}
    \item \(B\) is perfect;
    \item the additive group \((B,+)\) is perfect;
    \item the multiplicative group \((B,\cdot)\) is perfect.
\end{enumerate}
\end{corollary}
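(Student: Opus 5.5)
The plan is to prove the cycle of implications (1)$\Rightarrow$(2), (2)$\Rightarrow$(1), (1)$\Rightarrow$(3) and (3)$\Rightarrow$(1).

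\textbf{(1)$\Rightarrow$(2).} This is exactly Lemma~\ref{lemma(B,+)perfect}.

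\textbf{(2)$\Rightarrow$(1).} Suppose $(B,+)$ is perfect. Every abelian skew brace is in particular a skew brace with abelian additive group. So $B/[B,B]^B$ has abelian additive group, and this additive group is a quotient of the perfect group $(B,+)$. Hence $B/[B,B]^B=0$, that is, $B=[B,B]^B$.

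\textbf{(3)$\Rightarrow$(1).} The same argument works with the multiplicative group. The additive and multiplicative cosets of an ideal coincide, so $(B/[B,B]^B,\cdot)$ is a quotient of $(B,\cdot)$. It is abelian, and $(B,\cdot)$ is perfect, so the quotient is trivial.

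\textbf{(1)$\Rightarrow$(3).} This is the substantive direction, and I would derive it from Theorem~\ref{ThStructure}. Write $B=B^2\circ B^{2,\operatorname{op}}$. By the proposition on internal central products, $B$ is a quotient of $B^2\times B^{2,\operatorname{op}}$ via the homomorphism $\varphi(i,j)=i+j$, so $(B,\cdot)$ is a quotient of the direct product of the multiplicative groups $(B^2,\cdot)\times(B^{2,\operatorname{op}},\cdot)$. A direct product of perfect groups is perfect, and a quotient of a perfect group is perfect. So it suffices to show that both multiplicative groups are perfect.

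For the first factor, $B^2$ is almost trivial, so $ab=b+a$ on $B^2$. Thus $(B^2,\cdot)$ is the opposite group of $(B^2,+)$, which is anti-isomorphic to $(B^2,+)$ and therefore isomorphic to it via $x\mapsto -x$. Since $(B^2,+)$ is perfect, so is $(B^2,\cdot)$.

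For the second factor, $B^{2,\operatorname{op}}$ is trivial, so its multiplicative group equals its additive group, which is perfect.

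\textbf{Main obstacle.} No step is hard once Theorem~\ref{ThStructure} is available. The one point that needs care is checking that the multiplication of $B$ really is induced from the direct product of the multiplicative groups of the two components. This is exactly the multiplicativity of $\varphi$ established in the proposition showing that internal central products are external, so I would cite that computation rather than redo it.
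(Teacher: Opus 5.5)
Your proof is correct and follows essentially the paper's argument: (2)$\Rightarrow$(1) and (3)$\Rightarrow$(1) are the easy quotient directions, (1)$\Rightarrow$(2) is Lemma~\ref{lemma(B,+)perfect}, and (1)$\Rightarrow$(3) comes from Theorem~\ref{ThStructure} by realizing $(B,\cdot)$ as a quotient of the multiplicative group of $B^2\times B^{2,\operatorname{op}}\simeq\operatorname{aTriv}(H)\times\operatorname{Triv}(G)$ with $G,H$ perfect. You are in fact more explicit than the paper, which leaves (1)$\Rightarrow$(2) implicit and does not say why $(B^2,\cdot)$ is perfect; your isomorphism $x\mapsto -x$ from $(B^2,+)$ onto its opposite group fills that in.
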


\begin{proof}
If \((B,+)\) is perfect, then \([B,B]_+=B\), and hence
\(B=[B,B]^B\). Similarly, if \((B,\cdot)\) is perfect, then
\([B,B]_{\cdot}=B\), and again \(B=[B,B]^B\).

It remains to prove that, if \(B\) is perfect as a skew brace, then its
multiplicative group \((B,\cdot)\) is perfect. Assume therefore that \(B\)
is perfect. By Theorem~\ref{ThStructure}, we have
\[
B \simeq
\bigl(\operatorname{Triv}(G)\times \operatorname{aTriv}(H)\bigr)/I
\]
for some finite perfect groups \(G\) and \(H\), and for some central ideal
\(I\). Hence \((B,\cdot)\) is a quotient of the perfect group
\((G,\cdot)\times (H,\cdot)\). It follows that \((B,\cdot)\) is perfect.
\end{proof}

The following example shows that the two-sidedness assumption in
Corollary~\ref{CorollaryEquivalentconditions} is essential.

\begin{example}\rm \label{oddexample}
By \cite[Theorem 1.1]{Byott2026}, there exists a simple skew brace $S$ of order
\[
221875=5^{5}\cdot 71
\]
such that
\[
(S,+)\simeq (C_{5})^{5}\rtimes C_{71}
\qquad\text{and}\qquad
(S,\cdot)\simeq C_{71}\rtimes P,
\]
where $P$ is a group of order $5^{5}$ acting non-trivially on $C_{71}$. Since $S$ is non-trivial and simple, it follows that $S=[S,S]^S$, and hence $S$ is perfect. However, neither $(S,+)$ nor $(S,\cdot)$ is a perfect group.
\end{example}

The following corollary generalizes the classification of finite simple two-sided skew braces:

\begin{corollary} \label{CorollaryZ(B)=0}
Let $B$ be a finite perfect two-sided skew brace. If $Z(B)=0$, then
\[
B \simeq \operatorname{Triv}(G) \times \operatorname{aTriv}(H)
\]
for some finite perfect groups $G$ and $H$.
\end{corollary}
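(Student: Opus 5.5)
Our plan is to apply Theorem~\ref{ThStructure} and show that, when \(Z(B)=0\), the internal central product \(B=B^2\circ B^{2,\operatorname{op}}\) is a direct product. By Step~2 of that proof, \(I=B^2\cap B^{2,\operatorname{op}}\leq Z(B)=0\), so \(I=0\). By the proposition that internal central products are external, \(B\) is isomorphic to \((B^2\times B^{2,\operatorname{op}})/\{(i,-i)\mid i\in I\}\). Since \(I=0\), this gives \(B\simeq B^2\times B^{2,\operatorname{op}}\). Concretely, the map \(\varphi(i,j)=i+j\) is an injective skew brace homomorphism, hence an isomorphism because \(B=B^2+B^{2,\operatorname{op}}\).

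Next we identify the factors. By Theorem~\ref{ThStructure}, \(B^{2,\operatorname{op}}\) is trivial with perfect additive group. Any trivial skew brace \(T\) equals \(\operatorname{Triv}(G)\), where \(G=(T,+)=(T,\cdot)\), so \(B^{2,\operatorname{op}}\simeq\operatorname{Triv}(G)\) with \(G\) perfect.

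Similarly, \(B^2\) is almost trivial with perfect additive group. For an almost trivial \(A\) we have \(ab=b+a\), so \((A,\cdot)\) is the opposite group of \((A,+)\), and \(A^{\operatorname{op}}\) is trivial. Hence \(A^{\operatorname{op}}=\operatorname{Triv}(H)\) with \(H=(A,\cdot)\), and therefore \(A=\operatorname{Triv}(H)^{\operatorname{op}}=\operatorname{aTriv}(H)\). The group \(H\) is anti-isomorphic to \((A,+)\) via the identity map, and hence isomorphic to it via \(x\mapsto -x\). So \(H\) is perfect, since perfectness is preserved by passing to the opposite group.

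Combining these, \(B\simeq\operatorname{aTriv}(H)\times\operatorname{Triv}(G)\simeq\operatorname{Triv}(G)\times\operatorname{aTriv}(H)\), where the last step just swaps the factors of the direct product. There is no real obstacle. The only points needing care are that the intersection vanishes, which follows directly from Step~2 and the hypothesis \(Z(B)=0\), and the bookkeeping of opposite structures in the identification \(B^2\simeq\operatorname{aTriv}(H)\).
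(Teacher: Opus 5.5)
Your proposal is correct and follows the same route as the paper: you apply Theorem~\ref{ThStructure}, realize $B=B^2\circ B^{2,\operatorname{op}}$ as the external product $(B^2\times B^{2,\operatorname{op}})/\{(i,-i)\mid i\in B^2\cap B^{2,\operatorname{op}}\}$, and note that the amalgamated ideal lies in $Z(B)=0$. You also make explicit two points the paper leaves implicit: why $Z(B)=0$ kills the amalgamated ideal, and why an almost trivial skew brace with perfect additive group is $\operatorname{aTriv}(H)$ for a perfect group $H$.
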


\begin{proof}
By Theorem~\ref{ThStructure}, we have
\[
B \simeq \bigl(\operatorname{Triv}(G)\times \operatorname{aTriv}(H)\bigr)/I
\]
for some finite perfect groups \(G\) and \(H\), and for some central ideal \(I\). If $Z(B)=0$, then $I=0$, and therefore
\[
B \simeq \operatorname{Triv}(G)\times \operatorname{aTriv}(H).
\]
\end{proof}

\begin{corollary}
Let \(B\) be a finite perfect two-sided skew brace. Then
\[
B/Z(B)\simeq \operatorname{Triv}(G)\times \operatorname{aTriv}(H)
\]
for some finite perfect groups \(G\) and \(H\).
\end{corollary}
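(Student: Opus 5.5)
The plan is to reduce the statement to Corollary~\ref{CorollaryZ(B)=0} applied to the quotient \(\overline B=B/Z(B)\). The two hypotheses of that corollary are that \(\overline B\) is a finite perfect two-sided skew brace and that \(Z(\overline B)=0\).

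\textbf{Perfectness and two-sidedness of the quotient.} The quotient \(\overline B\) is finite, and it is two-sided because the right brace identity passes to quotients by ideals. It is perfect: the canonical projection \(\pi:B\to\overline B\) is a surjective homomorphism of skew braces, so it maps commutator ideals onto commutator ideals. Concretely, \(\pi([B,B]^B)=[\overline B,\overline B]^{\overline B}\), because the image of an ideal under a surjective homomorphism is an ideal and generators map to generators. Hence \(\overline B=\pi(B)=\pi([B,B]^B)=[\overline B,\overline B]^{\overline B}\). Alternatively, by Corollary~\ref{CorollaryEquivalentconditions}, \((B,+)\) is perfect, hence so is its quotient \((\overline B,+)\), and then \(\overline B\) is perfect.

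\textbf{Trivial center.} The remaining input is \(Z(B/Z(B))=0\) for finite perfect two-sided skew braces, which the introduction quotes from \cite{Tsang2026}. If one wants a self-contained argument, I would try the following.

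Let \(z\in B\) with \(\bar z\in Z(\overline B)\). For every \(b\in B\), the elements \(z*b\), \(b*z\) and \([z,b]_+\) lie in \(Z(B)\). Then the map \(b\mapsto [z,b]_+\) is a homomorphism \((B,+)\to (Z(B),+)\), since the commutators are central. Likewise \(f_z(b)=z*b\) is a homomorphism, by the computation in Lemma~\ref{f_i homomorphism}; that computation only needs \(z*c\) to be additively central. By the same reasoning, \(g_z(b)=b*z\) is a homomorphism, using the two-sided computation of Lemma~\ref{g_i homomorphism}.

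Since \((B,+)\) is perfect (Lemma~\ref{lemma(B,+)perfect}) and \(Z(B)\) is abelian, all three maps vanish. So \(z\) is additively central, satisfies \(\lambda_z=\mathrm{id}\), and satisfies \(b*z=0\) for all \(b\). Combined with \(z*b=0\), the last condition gives \(zb=z+b=b+z=bz\). Hence \(z\in Z(B)\), that is, \(\bar z=0\).

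The main point to check is the homomorphism property of \(g_z\) when only \(b*z\) (rather than \(z\) itself) is central. Expanding \((b+c)z=bz-z+cz\) gives \(-c-b+bz-z+cz-z=-c+(b*z)+c+(c*z)\), which equals \(b*z+c*z\) by centrality of \(b*z\). So this step goes through.

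\textbf{Conclusion.} Applying Corollary~\ref{CorollaryZ(B)=0} to \(\overline B\) yields \(B/Z(B)\simeq\operatorname{Triv}(G)\times\operatorname{aTriv}(H)\) with \(G,H\) finite perfect groups.
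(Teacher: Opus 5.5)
Your proof is correct, and its skeleton is the paper's: take $Z(B/Z(B))=0$ from \cite[Corollary~1.8]{Tsang2026} and apply Corollary~\ref{CorollaryZ(B)=0} to $B/Z(B)$. The paper leaves implicit that $B/Z(B)$ is again finite, perfect and two-sided. You check this, and your second argument (perfectness of $(\overline B,+)$ via Lemma~\ref{lemma(B,+)perfect}) is the cleanest way to do it.

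What you genuinely add is a self-contained proof of $Z(B/Z(B))=0$, a skew-brace version of Gr\"un's lemma. For $\bar z\in Z(\overline B)$, the maps $b\mapsto[z,b]_+$, $b\mapsto z*b$ and $b\mapsto b*z$ take values in $Z(B)\le Z(B,+)$. So they are homomorphisms from the perfect group $(B,+)$ into an abelian group (the third one by the two-sided expansion you carried out), hence zero. Then $zb=z+b=b+z=bz$ gives $z\in Z(B)$.

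This argument is valid. The one step you assert without comment, $b*z\in Z(B)$, follows from $\bar b\bar z=\bar z\bar b=\bar z+\bar b=\bar b+\bar z$ in $\overline B$. It is the same mechanism as Step~2 of Theorem~\ref{ThStructure}, except that only the values, not $z$ itself, need to be central. It makes the corollary depend only on results proved in the paper rather than on the external reference.
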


\begin{proof}
Since \(B\) is a finite perfect two-sided skew brace, by
\cite[Corollary~1.8]{Tsang2026} we have \(Z(B/Z(B))=0\). Therefore, applying
Corollary~\ref{CorollaryZ(B)=0} to \(B/Z(B)\), we obtain
\[
B/Z(B)\simeq \operatorname{Triv}(G)\times \operatorname{aTriv}(H)
\]
for some finite perfect groups \(G\) and \(H\).
\end{proof}

\begin{definition}  
   We say that a skew brace $B$ is quasi-simple if $B$ is perfect and $B/Z(B)$ is simple.
\end{definition}

 In the following we apply Theorem~\ref{ThStructure} to the classification of quasi-simple two-sided skew braces. 

\begin{corollary}
Let $B$ be a finite two-sided skew brace. If one of the following conditions
holds:
\begin{enumerate}
    \item \(B\) is quasi-simple;
    \item the additive group \((B,+)\) is quasi-simple;
    \item the multiplicative group \((B,\cdot)\) is quasi-simple,
\end{enumerate}
then $B$ is either trivial or almost trivial.
\end{corollary}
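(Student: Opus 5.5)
The plan is to reduce all three hypotheses to one situation. We show that $B$ is perfect, apply Theorem~\ref{ThStructure} to write $B=B^2\circ B^{2,\operatorname{op}}$, and then prove that one of the two components vanishes. If $B^{2,\operatorname{op}}=0$, then $B$ is almost trivial by definition. If $B^2=0$, then $B$ is trivial.

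\textbf{Reduction to perfectness.} In case (1), $B$ is perfect by definition. In cases (2) and (3), the relevant underlying group is quasi-simple, hence perfect. Corollary~\ref{CorollaryEquivalentconditions} then shows that $B$ is perfect.

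By Theorem~\ref{ThStructure} we have $B=B^2+B^{2,\operatorname{op}}$ with $[B^2,B^{2,\operatorname{op}}]_+=0$. The mixed products vanish, so for $a\in B^2$ and $b\in B^{2,\operatorname{op}}$ we get $ab=a+b=b+a=ba$. Hence the two components also commute multiplicatively, and $(B,\cdot)=B^2\cdot B^{2,\operatorname{op}}$. Both components have perfect additive group. On $B^{2,\operatorname{op}}$ the two operations coincide, and on $B^2$ they are opposite to each other. Hence each component has perfect multiplicative group as well.

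\textbf{Key observation.} If one component $K$ is abelian in either of its group structures, then, being perfect in that structure, it must be $0$. So it suffices to show that one component is central in $(B,+)$ or in $(B,\cdot)$.

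\textbf{Cases (2) and (3).} Let $X$ denote the quasi-simple group $(B,+)$ or $(B,\cdot)$. Then $B^2$ and $B^{2,\operatorname{op}}$ are normal subgroups of $X$, since they are ideals. They commute elementwise and together generate $X$. In a quasi-simple group every normal subgroup is either central or equal to the whole group. If both components equalled $X$, then $X$ would commute with itself and be abelian, contradicting perfectness and nontriviality. Hence one component is central in $X$, and so it is abelian in $X$. By the key observation it is $0$.

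\textbf{Case (1).} Let $\pi\colon B\to B/Z(B)$ be the projection. Then $\pi(B^2)$ and $\pi(B^{2,\operatorname{op}})$ are ideals of the simple brace $B/Z(B)$, and their sum is all of $B/Z(B)$. Suppose both images were the whole quotient. Then $B=B^2+Z(B)$. Since $B^{2,\operatorname{op}}$ commutes additively with $B^2$ and with $Z(B)$, we get $B^{2,\operatorname{op}}\le Z(B,+)$. By the key observation $B^{2,\operatorname{op}}=0$, which contradicts $\pi(B^{2,\operatorname{op}})\neq 0$. Therefore one image is zero, so that component lies in $Z(B)$. It is then abelian, and hence equal to $0$.

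The main obstacle is making sure that a central component really is abelian in the structure where its perfectness is known. This holds because on $B^{2,\operatorname{op}}$ the two operations coincide and on $B^2$ they are opposite, so being abelian in either structure transfers to the other.
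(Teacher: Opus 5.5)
Your proof is correct. Case (1) follows essentially the paper's argument, but Cases (2) and (3) take a genuinely different route.

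\textbf{Case (1).} The only difference from the paper is how you exclude the possibility that both images equal $B/Z(B)$. You show $B^{2,\operatorname{op}}\le Z(B,+)$, which forces $B^{2,\operatorname{op}}=0$. The paper instead notes that $B/Z(B)$ would then be abelian and perfect.

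\textbf{The paper's route for Cases (2) and (3).} In Case (2), the paper quotients by $Z(B,+)$, which is an ideal by Trappeniers' Corollary~2.4. It then reapplies Theorem~\ref{ThStructure} to $\widetilde B=B/Z(B,+)$ and uses $Z(\widetilde B,+)=0$ to split $(\widetilde B,+)$ as a direct product of the two components. It lifts back to $B$ via perfectness of $(B,+)$ and Lemma~\ref{LemmaB*B}. Case (3) is reduced to Case (2) through Tsang's external theorem that a quasi-simple multiplicative group forces $(B,+)\simeq(B,\cdot)$.

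\textbf{Your route.} You stay inside $B$ and make two observations:
\begin{enumerate}
\item $B^2$ and $B^{2,\operatorname{op}}$ commute elementwise and generate $B$ in both group structures.
\item Each component is perfect in both structures, since on it the operations coincide or are opposite.
\end{enumerate}
The standard fact that a normal subgroup $N$ of a quasi-simple group $G$ is central or all of $G$ then settles both cases uniformly. That fact follows because $NZ(G)/Z(G)$ is trivial or everything, and in the latter case $G/N$ is abelian.

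\textbf{What each approach buys.} Your argument is more elementary and self-contained: it avoids the quotient by $Z(B,+)$ and the second application of the structure theorem. It also removes the dependence on Tsang's result. Since a trivial or almost trivial brace automatically has $(B,+)\simeq(B,\cdot)$, your argument in fact reproves Tsang's conclusion for finite two-sided skew braces. The paper's route gains nothing extra for this corollary. It simply leans on existing literature; Tsang's theorem itself holds beyond the two-sided setting, but that extra generality is not used here.
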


\begin{proof}
We prove the three cases separately.

\medskip

\noindent
\textbf{Case 1: \(B\) is quasi-simple as a skew brace.}
Assume first that \(B\) is quasi-simple. Thus \(B\) is perfect and
\(B/Z(B)\) is a simple skew brace. Since \(B\) is finite, perfect and two-sided, Theorem~\ref{ThStructure} gives $B=B^2\circ B^{2,\operatorname{op}}.$ Put $I=B^2$ and $J=B^{2,\operatorname{op}}.$ By the definition of internal central product, we have
\[
B=I+J,\qquad [I,J]_+=0,\qquad I*J=J*I=0,
\qquad I\cap J\leq Z(B).
\]
Moreover, by Theorem~\ref{ThStructure}, \(I\) is almost trivial with perfect
additive group, while \(J\) is trivial with perfect additive group. Let $\overline B=B/Z(B),$
and denote by \(\overline I\) and \(\overline J\) the images of \(I\) and \(J\)
in \(\overline B\). Then \(\overline I\) and \(\overline J\) are ideals of
\(\overline B\), and
\[
\overline B=\overline I+\overline J.
\]
Since \(\overline B\) is simple, each of \(\overline I\) and \(\overline J\) is
either zero or equal to \(\overline B\). We claim that \(\overline I\) and \(\overline J\) cannot both be equal to
\(\overline B\). Indeed, if $\overline I=\overline B=\overline J,$
then the relations \([I,J]_+=0\) and \(I*J=0\) imply
\[
[\overline B,\overline B]_+=0
\qquad\text{and}\qquad
\overline B*\overline B=0.
\]
Hence \(\overline B\) is abelian. On the other hand, since \(B\) is perfect,
also \(\overline B\) is perfect. Therefore \(\overline B=0\), a contradiction.
Thus one of \(\overline I\) and \(\overline J\) is zero. Suppose first that \(\overline I=0\). Then
\[
I=B^2\leq Z(B).
\]
By Theorem~\ref{ThStructure}, the additive group \((I,+)\) is perfect. But
\(I\leq Z(B)\leq Z(B,+)\), so \((I,+)\) is abelian. Hence \(I=0\). Therefore $B^2=0,$
and \(B\) is trivial. Suppose now that \(\overline J=0\). Then
\[
J=B^{2,\operatorname{op}}\leq Z(B).
\]
Again, by Theorem~\ref{ThStructure}, the additive group \((J,+)\) is perfect,
while \(J\leq Z(B,+)\) implies that \((J,+)\) is abelian. Hence \(J=0\).
Therefore $B^{2,\operatorname{op}}=0,$
and \(B\) is almost trivial. This proves the result when \(B\) is quasi-simple as a skew brace.

\medskip

\noindent
\textbf{Case 2: The additive group \((B,+)\) is quasi-simple.}
Assume now that the additive group \((B,+)\) is quasi-simple. In particular,
\((B,+)\) is perfect. Hence \(B\) is perfect as a skew brace. Therefore, by
Theorem~\ref{ThStructure}, $B=B^2\circ B^{2,\operatorname{op}}.$
Since \(B\) is two-sided and \(Z(B,+)\) is characteristic in \((B,+)\), by
\cite[Corollary~2.4]{Trappeniers2023} the subgroup \(Z(B,+)\) is an ideal of
\(B\). Put $\widetilde B=B/Z(B,+).$
Since \((B,+)\) is quasi-simple, the group \((\widetilde B,+)\) is simple.
Moreover, \(\widetilde B\) is again finite, perfect and two-sided. Applying Theorem~\ref{ThStructure} to \(\widetilde B\), we get $\widetilde B
=
\widetilde B^2\circ \widetilde B^{2,\operatorname{op}}.$
In particular,
\[
\widetilde B^2\cap \widetilde B^{2,\operatorname{op}}
\leq Z(\widetilde B)
\leq Z(\widetilde B,+).
\]
Since \((\widetilde B,+)\) is simple, either
\[
Z(\widetilde B,+)=0
\qquad\text{or}\qquad
Z(\widetilde B,+)=\widetilde B.
\]
The second possibility cannot occur. Indeed, if
\(Z(\widetilde B,+)=\widetilde B\), then \((\widetilde B,+)\) is abelian.
Since \(\widetilde B\) is perfect, this would force \(\widetilde B=0\), which
is impossible. Hence $Z(\widetilde B,+)=0.$
Therefore $\widetilde B^2\cap \widetilde B^{2,\operatorname{op}}=0.$
It follows that
\[
(\widetilde B,+)
\simeq
(\widetilde B^2,+)\times
(\widetilde B^{2,\operatorname{op}},+).
\]
Since \((\widetilde B,+)\) is simple, one of the two factors must be zero.
Thus
\[
\widetilde B^2=0
\qquad\text{or}\qquad
\widetilde B^{2,\operatorname{op}}=0.
\]
Suppose first that $\widetilde B^2=0.$
Then $B^2\leq Z(B,+).$
Since \(B=B^2+B^{2,\operatorname{op}}\), we get
$B=Z(B,+)+B^{2,\operatorname{op}}.$
Therefore
\[
(B,+)/B^{2,\operatorname{op}}
\simeq
Z(B,+)/(Z(B,+)\cap B^{2,\operatorname{op}}).
\]
The right-hand side is abelian. Hence \((B,+)/B^{2,\operatorname{op}}\) is
abelian. Since \((B,+)\) is perfect, this quotient is trivial. Therefore $B=B^{2,\operatorname{op}}.$
Equivalently, $B^{\operatorname{op}}=(B^{\operatorname{op}})^2.$
By Lemma~\ref{LemmaB*B}, applied to \(B^{\operatorname{op}}\), the skew brace
\(B^{\operatorname{op}}\) is almost trivial. Hence \(B\) is trivial.
The case $\widetilde B^{2,\operatorname{op}}=0$
is analogous and gives that \(B\) is almost trivial.

\medskip

\noindent
\textbf{Case 3: The multiplicative group \((B,\cdot)\) is quasi-simple.}
Finally, assume that the multiplicative group \((B,\cdot)\) is quasi-simple.
By \cite[Theorem~1.4]{TsangQuasisimple2021}, for finite two-sided skew braces,
if \((B,\cdot)\) is quasi-simple, then
$(B,+)\simeq (B,\cdot).$
Hence \((B,+)\) is quasi-simple. Therefore Case~2 applies, and \(B\) is either
trivial or almost trivial.
\end{proof}

The next example shows that in the non-two-sided setting quasi-simplicity does not imply that the skew brace is trivial. Let \((G,\cdot)\) be a group, and let \(H,K\leq G\). We say that \(G\) admits an exact factorization through \(H\) and \(K\) if
\[
G=HK \qquad \text{and} \qquad H\cap K=1.
\]
Equivalently, every element \(x\in G\) can be written uniquely as \(x=hk\), with \(h\in H\) and \(k\in K\). In this situation, by \cite[Theorem~2.3]{SmoktunowiczVendramin2018}, one obtains a skew brace \((G,\cdot,\circ)\) as follows. For \(x,y\in G\), write \(x=hk\) with \(h\in H\) and \(k\in K\). Then define
\[
x\circ y = h y k.
\]

\begin{example} \rm
Recall that \(A_5\) admits the exact factorization $A_5=A_4C_5,$
where \(A_4\) is the stabilizer of a point in the natural action of \(A_5\) on
five points, and \(C_5\) is generated by a \(5\)-cycle. Thus $A_4\cap C_5=1,$
and every element of \(A_5\) can be written uniquely as a product \(xy\), with
\(x\in A_4\) and \(y\in C_5\).
By \cite[Theorem~2.3]{SmoktunowiczVendramin2018}, this exact factorization
defines a skew brace \(S\) whose additive group is \(A_5\) and whose
multiplicative group is isomorphic to
$A_4\times C_5.$ We now lift this construction to \(\operatorname{SL}(2,5)\). Let
\[
\pi:\operatorname{SL}(2,5)\longrightarrow
\operatorname{SL}(2,5)/Z(\operatorname{SL}(2,5))\simeq A_5
\]
be the natural quotient map. Write $A_5=XY,$
where \(X\simeq A_4\) and \(Y\simeq C_5\). Put
$\widetilde X=\pi^{-1}(X).$ Then $|\widetilde X|=2|X|=24.$ Moreover, \(\pi^{-1}(Y)\) has order \(10\). Choose a subgroup $\widetilde H\leq \pi^{-1}(Y)$
of order \(5\). Then \(\pi\) maps \(\widetilde H\) isomorphically onto \(Y\). We claim that
$\operatorname{SL}(2,5)=\widetilde X\widetilde H$
is an exact factorization. Indeed,
\[
|\widetilde X|\,|\widetilde H|
=
24\cdot 5
=
120
=
|\operatorname{SL}(2,5)|.
\]
Moreover, since \(24\) and \(5\) are coprime, we have $\widetilde X\cap \widetilde H=1.$
Thus the factorization is exact. Again by \cite[Theorem~2.3]{SmoktunowiczVendramin2018}, this exact
factorization defines a skew brace \(\widetilde S\) whose additive group is
$(\widetilde S,+)=\operatorname{SL}(2,5)$
and whose multiplicative group is isomorphic to
$(\widetilde S,\circ)\simeq \widetilde X\times \widetilde H.$
Since \(\operatorname{SL}(2,5)\) is perfect, the skew brace \(\widetilde S\)
is perfect. We next prove that
\[
\widetilde S/Z(\widetilde S)\simeq S.
\]
First, we show that \(\pi:\widetilde S\to S\) is a homomorphism of skew braces.
It is clearly a homomorphism for the additive groups. We check that it is also
a homomorphism for the multiplicative operations. Let \(g,t\in \widetilde S\), and write
\[
g=xh,
\qquad
x\in \widetilde X,\quad h\in \widetilde H,
\]
By the definition of the
multiplicative operation associated with an exact factorization, we have
\[
g\circ t=xth.
\]
Therefore
\[
\pi(g\circ t)
=
\pi(xth)
=
\pi(x)\pi(t)\pi(h).
\]
Since
\[
\pi(g)=\pi(x)\pi(h)
\]
is the decomposition of \(\pi(g)\) with respect to the exact factorization
\(A_5=XY\), the definition of the multiplicative operation on \(S\) gives
\[
\pi(g)\circ \pi(t)
=
\pi(x)\pi(t)\pi(h).
\]
Hence
\[
\pi(g\circ t)=\pi(g)\circ \pi(t),
\]
and so \(\pi:\widetilde S\to S\) is a skew brace homomorphism. Its kernel is $\ker(\pi)=Z(\operatorname{SL}(2,5)).$
It remains to prove that $Z(\widetilde S)=Z(\operatorname{SL}(2,5)).$
By definition of the center of a skew brace, every element of \(Z(\widetilde S)\)
is central in the additive group. Since the additive group of \(\widetilde S\)
is \(\operatorname{SL}(2,5)\), we immediately get $Z(\widetilde S)\leq Z(\operatorname{SL}(2,5)).$
Conversely, let $z\in Z(\operatorname{SL}(2,5)).$ Since
\[
\ker(\pi)=Z(\operatorname{SL}(2,5))\leq \widetilde X,
\]
the decomposition of \(z\) with respect to the exact factorization $\operatorname{SL}(2,5)=\widetilde X\widetilde H$
is simply
\[
z=z\cdot 1,
\qquad
z\in \widetilde X,\quad 1\in \widetilde H.
\]
We first show that \(z\in \ker(\lambda)\). For every \(t\in \widetilde S\),
using the definition of the operation \(\circ\), we have
\[
z\circ t=zt.
\]
Therefore
\[
\lambda_z(t)
=
z^{-1}(z\circ t)
=
z^{-1}zt
=
t.
\]
Thus
\[
\lambda_z=\operatorname{id}.
\]

It remains to show that \(z\) is central in the multiplicative group
\((\widetilde S,\circ)\). Let \(g\in \widetilde S\), and write
\[
g=xh,
\qquad
x\in \widetilde X,\quad h\in \widetilde H.
\]
Then
\[
\begin{aligned}
g\circ z
&=xzh
    &&\bigl(\text{by the definition of } \circ\bigr)\\
&=xhz
    &&\bigl(z\in Z(\operatorname{SL}(2,5))\bigr)\\
&=gz
    &&\bigl(g=xh\bigr)\\
&=zg
    &&\bigl(z\in Z(\operatorname{SL}(2,5))\bigr)\\
&=z\circ g
    &&\bigl(z=z\cdot 1,\ z\in\widetilde X,\ 1\in\widetilde H\bigr).
\end{aligned}
\]
Therefore $z\in Z(\widetilde S,\circ).$
Together with \(\lambda_z=\operatorname{id}\) and
\(z\in Z(\operatorname{SL}(2,5))=Z(\widetilde S,+)\), this gives $z\in Z(\widetilde S).$
Hence $Z(\operatorname{SL}(2,5))\leq Z(\widetilde S).$
We conclude that
\[
Z(\widetilde S)=Z(\operatorname{SL}(2,5)).
\]
By the first isomorphism theorem for skew braces we obtain
\[
\widetilde S/Z(\widetilde S)\simeq S.
\]
As \(S\) is simple and \(\widetilde S\) is perfect, it follows that
\(\widetilde S\) is quasi-simple.

\end{example}

\end{document}